\documentclass[11pt]{amsart}
\author{Maya Saran}
\subjclass[2000]{03E15, 28A05, 54H05}
\keywords{descriptive set theory, ideals of compact sets}

\address{Ashoka University\\
Sonepat, Haryana\\
India}

\email{maya.saran@ashoka.edu.in}
\title{$G_{\delta}$ sets in $\sigma$-ideals generated by compact sets}
\usepackage{amsmath}
\usepackage{amsthm}
\usepackage{amssymb,latexsym}
\usepackage{enumerate}

\newtheorem{thm}{Theorem}

\newtheorem{cor}[thm]{Corollary}
\newtheorem{lemma}[thm]{Lemma}

\newtheorem{prop}[thm]{Proposition}

\theoremstyle{definition}

\newcommand{\F}{\mathcal{F}}
\newcommand{\Gee}{\mathcal{G}}

\newcommand{\B}{\mathcal{B}}
\newcommand{\A}{\mathcal{A}}
\newcommand{\U}{\mathcal{U}}
\newcommand{\V}{\mathcal{V}}
\newcommand{\W}{\mathcal{W}}
\newcommand{\K}{\mathcal{K}}
\newcommand{\gd}{G_{\delta}}
\newcommand{\fs}{F_{\sigma}}
\newcommand{\ol}[1]{\overline{#1}}
\newcommand{\bbn}{\mathbb{N}}

\begin{document}

\begin{abstract}
	Given a compact Polish space $E$ and   the hyperspace of its compact subsets $\mathcal{K}(E)$, we consider  $G_{\delta}$ $\sigma$-ideals of compact subsets of $E$.  Solecki has shown that any  $\sigma$-ideal in a broad natural class of $G_{\delta}$ ideals can be represented via a compact subset of $\mathcal{K}(E)$; in this article we examine the behaviour of $G_{\delta}$ subsets of $E$ with respect to the  representing set. Given an ideal $I$ in this class, we construct a representing set that recognises a compact subset of $E$ as being ``small'' precisely when it is in $I$, and recognises a $G_{\delta}$ subset of $E$ as being ``small'' precisely when it is covered by countably many compact sets from $I$. 
\end{abstract}
\maketitle
\section{Introduction}

\noindent Let $E$ be a compact Polish space and let $\K(E)$ denote the hyperspace of its compact subsets, equipped with the Vietoris topology. A basis for this topology consists  of sets of the  form 
\[\{F \in \K(E): F \subseteq U_0,\; F \cap U_i \neq \emptyset\; \forall i = 1,\ldots, k\}\] for some $k \in \bbn$ and open sets $U_0,\ldots,U_k \subseteq E$, with $U_i \subseteq U_0$ for each $i$. A nonempty set $I \subseteq \K(E)$ is an  \emph{ideal} of compact sets if it is closed under the operations of taking compact subsets and finite unions. An ideal $I$ is a \emph{$\sigma$-ideal} if it is also closed under countable unions whenever the union itself is compact. Such families arise commonly in analysis out of various notions of smallness. 

It is now long established that the condition of being an ideal or $\sigma$-ideal of compact sets is strongly related to the complexity of $I$ considered as a subset of $\K(E)$. By a result of Dougherty-Kechris and Louveau (see \cite{Kec2}), we know that if $I$ is a $\gd$ ideal, it must in fact be a $\sigma$-ideal, and by results of Kechris--Louveau--Woodin proved in the seminal paper \cite[section 1]{KLW}, we know that if a  $\sigma$-ideal is coanalytic or analytic, it must be either  complete coanalytic or simply $\gd$. Having established this dichotomy \cite{KLW} focused more on coanalytic ideals than on the class of $\gd$ ideals, however the latter includes rich examples and its theory is far from trivial. 

In this article we consider $\gd$ $\sigma$-ideals of compact sets that also satisfy the following condition, formulated by Solecki in  \cite{SS1}: a collection of compact sets $I \subseteq \K(E)$ has \emph{property~$(*)$} if, for any sequence of sets $(K_n)_{n \in \bbn} \subseteq I$, there exists a $\gd$ set $G$ such that $\bigcup_n K_n \subseteq G$ and $\K(G) \subseteq I$.  While property $(*)$ is stronger than the condition of being a $\sigma$-ideal, it holds in all natural examples of $\gd$ $\sigma$-ideals, including the ideals of compact meager sets, measure-zero sets, sets of dimension $\leq n$ for fixed $n \in \bbn$, and Z-sets. (See \cite{SS1} for these and other examples and a discussion of property $(*)$.) Solecki has shown  that any such ideal is  represented via the meager ideal of a closed subset of $\K(E)$, as follows. For $A \subseteq E$, we define $A^*$ as the set of all compact subsets of $E$ that intersect $A$, i.e., 
\[A^* = \{K \in \K (E) : K \cap A \neq \emptyset \}.\]  
The representation theorem (\cite[section 3]{SS1}) says that, if $I \subseteq \K(E)$ is coanalytic and nonempty, then $I$ has property $(*)$ if and only if there exists a closed set $\F \subseteq \K(E)$ such that 
\[\forall K \in \K(E),\; K \in I \iff K^* \textrm{ is meager in } \F.\]
Here when we say that $K^*$  is meager in $\F$, we mean that $K^* \cap \F$  is relatively meager in $\F$.
(This representation is a `category analogue' to a result of Choquet (see \cite{Choquet}) that establishes a correspondence between alternating capacities of order $\infty$  on $E$ and probability Borel measures on $\K(E)$.) We have shown in \cite{MS} that as long as $I$ has no nonmeager sets the set $\F$  can be assumed to be \emph{upward closed}, i.e., it contains all compact supersets of its members. The upward closedness of  $\F$ ensures that the map $K  \mapsto K^* \cap \F$, a fundamental function in this context, is continuous.

Motivated by  results on $\gd$ ideals with property $(*)$, Solecki has framed in \cite[section 7]{SS1} some natural further questions about the relationship of $\gd$ subsets of $E$ to a representing set $\F$. (Another question asks if every thin $\gd$ ideal has property $(*)$.)
The present paper takes up the following particular question: given a $\gd$ ideal $I$ with property $(*)$, does $I$ have a representing set  $\F$ for which, given a  $\gd$ set $G\subseteq E$,  $G^*$ will be meager in $\F$ precisely when $G$ is covered by countably many compact sets in $I$? 
We answer in the affirmative, and if the ideal under consideration contains only  sets with empty interiors, we construct a representing set $\F$ that is also upward closed. (The other questions remain open.)  

\begin{thm} \label{yay} 
	Let $E$ be a compact Polish space and let $I \subseteq \K(E)$ be a  $\gd$ ideal of compact sets, with property~$(*)$. Then there exists a  compact set $\F \subseteq \K(E)$ such that any $\gd$ subset $G$ of $E$ is covered by countably many sets in $I$ if and only if $G^*$ is meager in $\F$.
	
	Further, if  $I$ contains only meager sets, then there exists a set $\F$ as above that is also upward closed. 
	\end{thm}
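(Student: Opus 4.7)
The plan is to address the two directions of the equivalence separately.

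The easy direction---$G$ covered by countably many $K_n \in I$ implies $G^* \cap \F$ meager in $\F$---holds whenever $\F$ is a Solecki representing set for $I$. Writing $G \subseteq \bigcup_n K_n$, any compactum meeting $G$ must meet some $K_n$, so $G^* \subseteq \bigcup_n K_n^*$; by Solecki's representation each $K_n^* \cap \F$ is meager in $\F$, and the countable union remains meager.

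The hard direction is the converse: $G^* \cap \F$ meager implies $G$ is covered by countably many sets of $I$. One step comes essentially for free from the representation: if some compact $K \subseteq G$ were not in $I$, then $K^* \cap \F$ would be non-meager, and $K^* \subseteq G^*$ would force $G^* \cap \F$ non-meager, contradicting the hypothesis. So $\K(G) \subseteq I$. But this is strictly weaker than being countably $I$-covered for $G_{\delta}$ sets: in $[0,1]$ with $I$ the $\gd$ $\sigma$-ideal of Lebesgue measure-zero compacta, a dense $\gd$ set of measure zero has all compact subsets in $I$, yet it is not covered by countably many sets of $I$---any such cover would place it inside a meager $F_{\sigma}$, while it is comeager. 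The real work is to design $\F$ to additionally detect such ``spread-out'' $\gd$ sets.

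My plan is to begin with a Solecki representing set $\F_0$ (the upward closed variant from \cite{MS} for the second half of the theorem) and enrich it by a tree-based fusion argument indexed by a countable basis of $E$. For each basic open set $U$ whose closure is not in $I$, I would add to $\F$ sufficiently many compacta meeting $U$ so that $\{K \in \F : K \cap U \neq \emptyset\}$ is locally comeager in $\F$. The intended effect is that for a $\gd$ set $G$ failing to be covered by countably many $I$-sets, property~$(*)$ produces a basic open in which $G$ lies densely without admitting an $I$-cover, and the additions over that open force $G^* \cap \F$ to be non-meager. For the upward closed case, each added compactum is chosen as a superset of a generator to preserve upward closure. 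The main obstacle is the balancing act in this enrichment: adding enough to $\F$ to detect the non-coverability of arbitrary $\gd$ sets, while preserving the Solecki meagerness $K^* \cap \F$ for every $K \in I$, and (for the second part) maintaining upward closedness throughout; property~$(*)$ is the key mediator, permitting the passage from countable collections of $I$-compacta to enveloping $\gd$ sets.
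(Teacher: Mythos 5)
Your opening paragraph is correct, and you have accurately identified the crux of the problem: the example of a comeager null $G_\delta$ set in $[0,1]$ is exactly the phenomenon that must be captured (it appears in the paper's introduction in the same role). But there are two substantial issues with the plan.

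First, a smaller point: your preliminary reduction is not to the right normal form. You deduce from "$G^*$ meager" only that $\K(G)\subseteq I$. What the paper's Proposition~\ref{prop} establishes is that after a Cantor--Bendixson--type transfinite derivation, one may assume $G$ is \emph{everywhere big w.r.t.\ $I$}: every nonempty relatively open subset of $G$ has \emph{closure} not in $I$. These two conditions are not the same. It is the everywhere-big normal form (obtained by successively stripping away the $I$-small open pieces of $G$ and stopping at a fixed point, which exists since each $G_\alpha$ is closed in $G$) that drives the main argument; $\K(G)\subseteq I$ alone is not the invariant that the game will use. Your reduction step needs to be replaced by the derivation.

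Second, and decisively: the ``enrich $\F_0$ by a tree-based fusion so that $\{K\in\F: K\cap U\neq\emptyset\}$ is locally comeager'' step carries the entire weight of the theorem but contains no mechanism. Note that if $\F$ is upward closed and nonempty, then for any nonempty open $U$ the set $U^*\cap\F$ is already open dense in $\F$, so the stated goal is vacuous as written; the real issue is a $G_\delta$ set $G$, not an open set. The paper's construction shows what is actually required: one fixes a decreasing sequence of open downward-closed sets $\U_n$ with $\bigcap_n\U_n=I$ satisfying Solecki's condition~(\ref{dcs}), fixes a meager compact $M$ everywhere big w.r.t.\ $I$ and a sequence of pairwise disjoint regular closed sets $(M_i)$ converging to $M$, and defines, for each $m$, a set $\A_m$ by an intricate combinatorial condition on how a compact $F$ intersects the $M_i$'s relative to finitely many basic open sets $B_1,\dots,B_p$ and the $\U_n$'s (Conditions 1--3 with subconditions (\ref{conda})--(\ref{lastcond})). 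The sets $\F_m=\ol{\A_m}$ are upward closed, and comeagerness of $G^*$ in a suitable $\F_{m_0}$ is proved by a winning strategy for Player II in the Banach--Mazur game, where at each round the game position is controlled by the set $R$ of ``forbidden'' $M_i$'s and the blank-counting conditions (\ref{condc}). After that there is a further gluing (via ``permission sets'' $P^n_m$) of countably many such families across an open cover of $E$, plus a separate reduction for the case where $I$ contains a non-meager set (one decomposes $E=U\cup E'$ with $\K(U)\subseteq I$ and applies the meager case to $I\cap\K(E')$). None of this is present in or suggested by ``tree-based fusion indexed by a countable basis,'' and I do not see how any generic fusion argument could preserve meagerness of $K^*$ for all $K\in I$ while forcing non-meagerness of $G^*$ for every everywhere-big $G_\delta$ set $G$ --- the two demands are in genuine tension, and the resolution in the paper is entirely in the combinatorics of the $\A_m$ definition, not in an abstract enrichment principle. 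As written, the proposal states the goal without supplying a proof of it.
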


	In fact a stronger statement is true:  we can replace $\gd$ subsets of $E$ by analytic sets  by invoking Solecki's theorem from \cite{SS2}, viz., given a family of compact subsets of a Polish space $E$, an analytic subset of $A$ of $E$ cannot be covered by countably many sets from the family if and only if $A$ has a $\gd$  subset that cannot be so covered. We give this stronger result as a corollary.

\begin{cor}  
	Let $E$ be a compact Polish space and let $I \subseteq \K(E)$ be a  $\gd$ ideal of compact sets, with property~$(*)$. Then there exists a  compact set $\F \subseteq \K(E)$ such that any analytic subset $A$ of $E$ is covered by countably many sets in $I$ if and only if $A^*$ is meager in $\F$.	

	Further, if  $I$ contains only meager sets, then there exists a set $\F$ as above that is also upward closed. 
	\end{cor}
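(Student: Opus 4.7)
The plan is to take $\F$ exactly as furnished by Theorem~\ref{yay} and show that the same $\F$ works for analytic sets, using Solecki's theorem from \cite{SS2} to transfer the $\gd$ equivalence to an analytic one. Before anything else I would record the fact that $\F$ already represents membership in $I$ on the level of compact sets: if $K \in \K(E)$, then $K$ is $\gd$, and $K$ is covered by countably many members of $I$ if and only if $K \in I$ (the nontrivial direction uses that $I$ is a $\sigma$-ideal closed under taking compact subsets, so $K = \bigcup_n (K \cap K_n)$ is a compact member of $I$). Hence, by Theorem~\ref{yay}, $K \in I$ if and only if $K^*$ is meager in $\F$.

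For the forward direction of the corollary, suppose $A \subseteq \bigcup_n K_n$ with $K_n \in I$. Then $A^* \subseteq \bigcup_n K_n^*$, each $K_n^* \cap \F$ is meager in $\F$ by the observation above, and so $A^* \cap \F$ is a countable union of meager sets and hence meager in $\F$. For the reverse direction, suppose $A^*$ is meager in $\F$ and assume, toward a contradiction, that $A$ is not covered by countably many members of $I$. By the theorem of Solecki from \cite{SS2}, applied to the family $I$, there is a $\gd$ set $G \subseteq A$ that is not covered by countably many sets from $I$ either. But $G \subseteq A$ gives $G^* \cap \F \subseteq A^* \cap \F$, so $G^*$ is meager in $\F$, and then Theorem~\ref{yay} forces $G$ to be covered by countably many members of $I$, a contradiction.

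The final clause of the corollary is immediate: if $I$ contains only meager sets, then the second half of Theorem~\ref{yay} lets us choose $\F$ upward closed to begin with, and the argument above goes through unchanged. There is no real obstacle here; the only place needing care is verifying that the $\F$ produced for $\gd$ sets really does represent $I$ on compact sets (so that the forward direction is legitimate), and that is handled by the $\sigma$-ideal argument in the first paragraph. The corollary is thus essentially a one-line consequence of Theorem~\ref{yay} together with the $\gd$-hull dichotomy of \cite{SS2}.
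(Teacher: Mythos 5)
Your proposal is correct and follows essentially the route the paper itself indicates: use Solecki's $G_\delta$-hull theorem from \cite{SS2} to reduce the reverse implication for an analytic set $A$ to the $G_\delta$ case handled by Theorem~\ref{yay}, and obtain the forward implication directly from the meagerness of each $K_n^*$ and the countability of the union. The upward-closedness clause is inherited from Theorem~\ref{yay} unchanged, exactly as you say.
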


Note that the set $\F$ (of  both the theorem and the corollary) will indeed be a representing set for $I$, i.e., for any compact  $K \subseteq E$, the membership of $K$ in $I$ will be characterized by the meagerness of $K^*$ in  $\F$. (The trivial proof of this is indicated in Proposition \ref{prop}.)

We remark that the condition that $I$  contain only meager sets is necessary if the set $\F$ of Theorem \ref{yay} is to be upward closed (indeed, if any representing set for $I$ is to be upward closed).  For if  $\F \subseteq \K(E)$ is nonempty and upward closed and  $U \subseteq E$ is nonempty and open, the set $U^* \cap \F$, an open subset of $\F$, is automatically nonempty. Thus if any set $K \subseteq E$ has nonempty interior, $K^*$ will have nonempty interior in $\F$.

A final remark on a consequence of Theorem \ref{yay}. In some cases, the notion of smallness that defines $I$ can also be applied in a natural way to $\gd$ sets. (For example, $I$ might consist of compact nullsets with respect to Lebesgue measure on $[0,1]$;  here the notion of being a nullset  applies to $\gd$ sets as well.)  Theorem \ref{yay} implies that a representing set $\F$ that works to identify small compact sets $K$ (via the meagerness of $K^*$ in $\F$) need not work to identify small $\gd$ sets. (Continuing with the same example, the theorem gives us a  set $\F$ such  that, if  $K \subseteq E$ is compact then $K^*$ is meager in $\F$ exactly when $K$ is a nullset --- but given a $\gd$ set $G$, the meagerness of $G^*$ in $\F$ depends not at all on whether $G$ is null, but only on whether $G$ is covered by countably many compact nullsets. And certainly $[0,1]$ does have $\gd$ nullsets that cannot be so covered, for example, comeager nullsets.)

\section{Preliminaries}

We will say that a set $\A \subseteq \K(E)$ is \emph{downward closed} if $\A$ contains all compact subsets of its members. We will use the notation $\gd(E)$ for the collection of all $\gd$ subsets of $E$. 

The next proposition indicates the approach we will take in proving Theorem \ref{yay}:  if  $\F\subseteq \K(E)$ is known to be a representing set for $I$, it suffices to consider $\gd$ sets all of whose nonempty open subsets have closure not in $I$. 

\begin{prop} \label{prop} 
Let $E$ be a compact Polish space and let $I \subseteq \K(E)$ be a nonempty $\gd$ ideal of compact sets, with property~$(*)$. Let  $\F  \subseteq \K(E)$. 
Then the following holds
\begin{multline}
\forall  G  \in  \gd(E),\\
    \Big( G \subseteq \bigcup_{n\in \bbn} K_n\ \text{ for some } K_n \in I \Big) \iff \Big(G^*  \textrm{ meager in } \F \Big) 
    \label{1}%
\end{multline}
if and only if the two conditions below hold:
\begin{equation}
\forall K \in \K(E),\; K \in I \iff K^* \textrm{ is meager in } \F,
\label{char}
\end{equation}
and
\begin{multline}
\forall  \text{ nonempty }G  \in  \gd(E),\\
\Big(\forall \text{open } U,\; \; U \cap G \neq \emptyset \ \Rightarrow \ \ol{U\cap G} \notin I \Big) \implies  \Big(\;G^* \text{ is nonmeager in } \F\Big) .
\label{c3}%
\end{multline}
\end{prop}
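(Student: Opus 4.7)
The claim is an equivalence, and I would handle the two implications separately.

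For \eqref{1} $\Rightarrow$ \eqref{char} $\wedge$ \eqref{c3}, assume \eqref{1}. Condition \eqref{char} follows by specialising \eqref{1} to $G = K$: every compact set in the metrizable space $E$ is $\gd$, and a compact set covered by countably many members of the $\sigma$-ideal $I$ belongs to $I$ itself (recall $\gd$ ideals are automatically $\sigma$-ideals). For \eqref{c3}, suppose its hypothesis holds for some nonempty $G \in \gd(E)$. If $G^*$ were meager in $\F$, then \eqref{1} would yield a cover $G \subseteq \bigcup_n K_n$ with $K_n \in I$; since $G$ is Polish, the Baire category theorem applied to the relatively closed sets $K_n \cap G$ would produce an open $U$ with $\emptyset \neq U \cap G \subseteq K_n$ for some $n$, so $\overline{U \cap G} \subseteq K_n$ and hence $\overline{U \cap G} \in I$, contradicting the hypothesis.

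For the converse, assume \eqref{char} and \eqref{c3} and fix $G \in \gd(E)$. The ``$\Rightarrow$'' direction of \eqref{1} is routine: a cover $G \subseteq \bigcup_n K_n$ with $K_n \in I$ gives $G^* \subseteq \bigcup_n K_n^*$, and each $K_n^*$ is meager in $\F$ by \eqref{char}. For the reverse direction, assume $G^*$ is meager in $\F$. Call an open $V \subseteq E$ \emph{good} if $V \cap G$ is covered by countably many members of $I$, and let $U$ be the union of all good open sets. By the Lindel\"of property of $E$, the set $U$ is itself a countable union of good opens and is therefore good, so fix $L_n \in I$ with $G \cap U \subseteq \bigcup_n L_n$. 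Let $H = G \setminus U$, a $\gd$ set with $H \subseteq G$, so that $H^*$ is meager in $\F$. The goal reduces to showing $H = \emptyset$, for then $G = G \cap U$ is already covered by the $L_n$.

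Supposing $H \neq \emptyset$, I would verify that $H$ satisfies the hypothesis of \eqref{c3}, which will yield $H^*$ nonmeager in $\F$ and thus contradict $H^* \subseteq G^*$. Take open $W$ with $W \cap H \neq \emptyset$, and suppose for contradiction that $\overline{W \cap H} \in I$. The decomposition
\[ W \cap G = (W \cap H) \cup (W \cap G \cap U) \subseteq \overline{W \cap H} \cup \bigcup_n L_n \]
then exhibits $W \cap G$ as covered by countably many members of $I$, so $W$ is good, forcing $W \subseteq U$ and hence $W \cap H = \emptyset$, a contradiction. Therefore $\overline{W \cap H} \notin I$, as required.

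The main point to get right is the construction of the decomposition $G = (G \cap U) \sqcup H$: a maximal open set $U$ on which $G$ is already countably covered, whose complement in $G$ locally evades $I$ in the precise sense needed by \eqref{c3}. Second countability of $E$ is what makes $U$ itself good, and the ideal structure of $I$ (closure under compact subsets and finite unions) is what fuses $\overline{W \cap H}$ with the cover of $G \cap U$ into a single countable cover witnessing that $W$ is good. Property $(*)$ does not seem to enter this argument directly; the $\gd$-ness of $I$ is used only through the derived fact that $I$ is a $\sigma$-ideal.
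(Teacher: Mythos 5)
Your proof is correct, and the two directions should be assessed separately. The forward direction ((\ref{1}) implies (\ref{char}) and (\ref{c3})) matches the paper's argument essentially line for line: specialising to $G = K$ for (\ref{char}), with the Baire category argument for (\ref{c3}). Your derivation of the converse of (\ref{char}) via the $\sigma$-ideal property (intersect the cover with $K$ and take the countable compact union) is a valid substitute for the paper's appeal to property~$(*)$, and your closing remark that nothing beyond $\sigma$-ideality is really used in this proposition is accurate.

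The reverse implication of (\ref{1}) under (\ref{char}) and (\ref{c3}) is where you take a genuinely different route. The paper runs a transfinite Cantor--Bendixson-type derivative, at each stage removing from $G_\alpha$ the relatively open subsets whose closure lies in $I$; the process stabilizes at some $\alpha_0 < \omega_1$, and the stable set is either empty (so $G$ is countably covered) or everywhere big (so (\ref{c3}) applies). You instead remove in a single step: let $U$ be the union of all open sets $V$ with $V \cap G$ countably covered, note by Lindel\"of that $U$ is itself such a set, and show that $H = G \setminus U$, if nonempty, satisfies the hypothesis of (\ref{c3}) directly. Your one-step argument closes because your removal criterion (``$V \cap G$ countably covered'') is strictly more inclusive than the paper's per-stage criterion (``$\ol{V}$ itself in $I$''), so there is nothing left to iterate: any open $W$ meeting $H$ with $\ol{W \cap H} \in I$ can be fused with the cover of $G \cap U$ to make $W$ good, forcing $W \subseteq U$ and $W \cap H = \emptyset$. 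This is arguably a cleaner proof of the proposition as stated. The paper's choice of the transfinite derivative is not wasteful, though: the remark immediately following the proof reuses the same transfinite machinery to decompose an arbitrary closed $F \subseteq E$ as $F' \cup \bigcup_n F_n$ with $F'$ closed and everywhere big and $F_n \in I$, a decomposition invoked later in Proposition~\ref{special} and Theorem~\ref{yay2}, so presenting it once inside this proof pays for itself.
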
 

\begin{proof} Let (\ref{1}) hold. The forward implication of (\ref{char}) is immediate because compact sets are $\gd$; the converse is immediate in light of property $(*)$.
To prove (\ref{c3}), suppose that  $G$ is a nonempty $\gd$ set such that for every nonempty relatively open subset of $G$ has closure not in $I$. By (\ref{1}), to show that  $G^*$ is nonmeager in $\F$, it suffices to show that $G$ cannot be covered by countably many sets in $I$. So suppose $G \subseteq \bigcup_n F_n$ for some  $F_n \in I$. Since $G$ is nonempty and Polish and each $F_n \cap G$ is relatively closed in $G$, for some $n \in \bbn$, $F_n \cap G$ contains a nonempty relatively open subset, say $U$, of $G$. Now $\ol{U} \subseteq \ol{F_n \cap G} \subseteq F_n \in I$, a contradiction. 

Now let (\ref{char}) and (\ref{c3}) hold. The implication from left to right in (\ref{1}) is immediate because if $G \subseteq \bigcup_{n} K_n$ then $G^* \subseteq \bigcup_{n} {K_n}^*$. To prove the other direction, suppose $G$ is a $\gd$ set not covered by the countable union of any sets in $I$. Set $G_0 = G$ and recursively define for successor ordinals $\alpha+1$ and limit ordinals $\lambda$ the sets
\begin{eqnarray*}
G_{\alpha +1} &=& G_{\alpha} \setminus \bigcup \big\{ U : U \subseteq G_{\alpha} \textrm{ relatively open in }G_{\alpha}\textrm{ and } \ol{U} \in I \big\};\\
G_{\lambda} &= &\bigcap_{\alpha < \lambda} G_{\alpha}.
\end{eqnarray*}
For each ordinal $\alpha$, $G_{\alpha}$ is a closed subset of $G$. So for some $\alpha_0 < \omega_1, G_{\alpha_0} = G_{\alpha_0 +1}$. If $G_{\alpha_0}$ were empty, then $G$ would be covered by a countable number of sets $U$ with $\ol{U} \in I$, a contradiction. (Countability can be obtained by considering open sets only from some countable basis.) So $G_{\alpha_0}$ is nonempty, and the closure of every relatively open nonempty subset of it is outside $I$. Now by (\ref{c3}), $G_{\alpha_0}^*$ is nonmeager in  $\F$ and therefore so is $G^*.$
\end{proof}

We will say that a  subset of $E$ is ``everywhere big with respect to $I$'' if all of its nonempty relatively open subsets have closure not in $I$. We make a remark for later use.  The transfinite process described in the proof above can be carried out on any closed subset  $F$  of $E$. At each stage, the set we are removing from $F$ can be written as the countable union of closed sets in $I$. So we can write $F$ as 
\[F = F' \cup \bigcup_{n\in \bbn} {F_n},\]
where each $F_n$ is a compact set in $I$, and $F'$ is a closed subset of $F$,  which, if nonempty, is everywhere big with respect to $I$. 

In light of   Proposition \ref{prop}, to prove Theorem \ref{yay}, we may find  a compact $\F \subseteq \K(E)$ that  serves as a representing set  for $I$ and then simply show \eqref{c3}. To examine the meagerness in $\F$ of some subset $\A$ of $\F$, we will employ the  Banach-Mazur game  in $\F$ on $\A$, which proceeds thus. Players I and II take turns playing nonempty open subsets of $\F$ as follows: 

\begin{tabular}{c c c c c c }
	Player I& $\V_0$ &&$\V_1$& \ldots&\\
	Player II&& $\W_0$ &&$\W_1$&\ldots
\end{tabular}

\noindent satisfying $\V_0 \supseteq \W_0 \supseteq \V_1 \supseteq \W_1 \supseteq \ldots$. Player II wins this run of the game if $\bigcap_n \W_n (=\bigcap_n \V_n) \subseteq \A$. The key fact about this game is that Player II has a winning strategy if and only if $\A$ is comeager in $\F$; see, for example, Section 8.H of \cite{DST} for a proof.  

We will make use of the  Hausdorff metric for the topology on $\K(E)$, derived from some fixed complete metric on $E$ thus. For $x \in E$, let $B(x,r)$ denote the open ball of radius $r$ around $x$. The Hausdorff distance between nonempty compact sets  $F$ and $K$ is then the infimum of all $\epsilon$ such that $F \subseteq \bigcup_{x\in K} B(x, \epsilon)$ and  $K \subseteq  \bigcup_{x\in F} B(x, \epsilon)$.

Finally, we shall make use of the following equivalence proved by Solecki in \cite[section 5]{SS1}. Solecki has shown that for a nonempty set $I \subseteq \K(E)$,  $I$ is a $\gd$ set with property~$(*)$ if and only if there exists a sequence of open, downward closed sets $\U_n \subseteq \K(E)$, $n \in \bbn$, such that $I = \bigcap_n \U_n$ and the sets $\U_n$ satisfy the condition that
		\begin{equation} \label{dcs}
		\forall K \in \U_n \; \; \exists m \in \bbn\;  \forall L \in \U_m\; \; K \cup L \in \U_n.
		\end{equation}
By taking the obvious finite intersections, we may in fact assume that $(\U_n)$ is a decreasing sequence.  For many $\gd$ ideals with property $(*)$, the sets  $\U_n \subseteq \K(E)$ refered to here present themselves naturally. For example, for the ideal of closed null sets for an outer regular Borel measure $\mu$,  we may define the sets $\U_n$  by setting
\[\U_n = \Big\{K \in \K(E) : K \subseteq U \textrm{ for some open }  U  \textrm{ such that }\mu(U) < \frac{1}{n}\Big\}.\]
It is straightforward to see that the sets $\U_n$ satisfy (\ref{dcs}) and their intersection is $I$.

\section{Proving the theorem}
The principal part of the proof of  Theorem \ref{yay} is the construction of a suitable closed and upward closed set $\F$ for the case where $I$ contains only meager sets. This construction requires $E$ to have infinitely many limit points, and also requires the existence of at least one meager compact set with at least three points that is everywhere big with respect to $I$. This necessitates the treatment of two relatively trivial special cases in a separate proposition, which we shall dispense with:

\begin{prop} \label{special}
Let $E$ be a compact Polish space  and let $I \subseteq \K(E)$ be a  $\gd$ ideal of compact sets, with property~$(*)$,  containing only meager sets.  If either one of the following conditions holds:
\begin{enumerate}
\item $E$ has only finitely many limit points, \emph{or}
\item every meager compact subset  of $E$ that is everywhere big with respect to $I$ has at most two points,
\end{enumerate}
then the conclusion of Theorem \ref{yay} holds.
\end{prop}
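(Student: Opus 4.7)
My plan is to reduce both cases to verifying condition \eqref{c3} of Proposition \ref{prop}. Since $I$ contains only meager sets, I would first invoke Solecki's representation theorem together with the upward-closedness result of \cite{MS} to obtain a closed, upward-closed $\F\subseteq\K(E)$ satisfying \eqref{char}. By Proposition \ref{prop}, it then suffices to show that for any nonempty $\gd$ set $G\subseteq E$ which is everywhere big with respect to $I$, $G^*$ is nonmeager in $\F$.

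For case~(1), the assumption forces $E$ to be countable: the isolated points of a compact separable metric space form an at-most-countable discrete subset, so $E$ itself is countable. Any nonempty $\gd$ set $G\subseteq E$ is then a countable nonempty Polish space, and by the Baire category theorem it has an isolated point $x$ (else $G$ would be a nonempty perfect Polish space, hence uncountable). The everywhere-big hypothesis applied to the relatively open singleton $\{x\}$ gives $\ol{\{x\}}=\{x\}\notin I$, whence by \eqref{char} the set $\{x\}^*\subseteq G^*$ is nonmeager in $\F$.

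For case~(2), fix a nonempty $\gd$ set $G$ which is everywhere big with respect to $I$. A direct computation (using density of $G$ in $\ol{G}$ together with downward closure of $I$) shows $\ol{G}$ is also everywhere big with respect to $I$. If $\ol{G}$ is meager, condition~(2) forces $|\ol{G}|\leq 2$; then $G$ is a finite compact set, the everywhere-big hypothesis applied with $U=E$ gives $G\notin I$, and \eqref{char} finishes. Otherwise $\ol{G}$ has nonempty interior $W$, and density of $G$ in $\ol{G}$ yields $\ol{V\cap G}=\ol{V}\notin I$ for every nonempty open $V\subseteq W$. I plan to handle this subcase by a Banach-Mazur argument in $\F$: fixing a decreasing basic sequence $G=\bigcap_n U_n$, Player~I plays nested basic opens $\V_n\subseteq\F$ of the form $\{M:M\subseteq V_0'^{(n)},\ M\cap V_j'^{(n)}\neq\emptyset\}\cap\F$, with one of the $V_j'^{(n)}$ chosen to lie inside $B(y_n,r_n/3)$, where $(y_n)\subseteq G\cap W$ is built adaptively after Player~II's moves so that $\ol{B(y_n,r_n)}\subseteq U_n\cap W$, $d(y_n,y_{n-1})<r_{n-1}/2$, $r_n<r_{n-1}/2$, and the Hausdorff-diameter of $\V_n$ shrinks to zero. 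The crucial move is: given $L\in\W_{n-1}$ lying in a basic neighborhood $\{M:M\subseteq V_0',\ M\cap V_j'\neq\emptyset\}\subseteq\W_{n-1}$, one picks a $V_j'$ (necessarily) contained in $B(y_{n-1},r_{n-1}/3)\subseteq W$, takes $y_n\in V_j'\cap G$ (nonempty by density of $G$ in $W$), and uses upward closure of $\F$ to produce $L\cup\{y_n\}\in\F$ which lies both in $\W_{n-1}$ and in the required refined opening. By diameter shrinking, $\bigcap_n\V_n$ is a single compact set $K\in\F$ with $K\cap B(y_n,r_n/3)\neq\emptyset$ for every $n$; the Cauchy sequence $(y_n)$ converges to $y\in\bigcap_n\ol{B(y_n,r_n)}\subseteq\bigcap_n U_n=G$, and since the $K$-witnesses in $B(y_n,r_n/3)$ also converge to $y$, we obtain $y\in K\cap G$, so $K\in G^*$ and $\F\setminus G^*$ is not comeager in $\F$.

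The main obstacle is the adaptive Banach-Mazur strategy in the last subcase. Three ingredients must be threaded in parallel through Player~II's moves: (i) keeping the balls $B(y_n,r_n)$ inside $W$ so that density of $G$ in $W$ continues to supply the next $y_n$; (ii) arranging the nesting $\ol{B(y_n,r_n)}\subseteq U_n$ forward so that the limit point $y$ lies in $G$ rather than just in $\ol{G}$; and (iii) translating Player~II's basic neighborhoods into refinements of Player~I's next $\V_n$, which uses upward closure of $\F$ to enlarge the witness by a single carefully chosen point $y_n$. Once these are managed together, the argument is routine.
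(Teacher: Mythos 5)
Your proof is correct. It reduces to condition~\eqref{c3} of Proposition~\ref{prop} exactly as the paper does, and case~(1) is essentially the paper's argument: you apply Baire category inside $G$ to find a relatively isolated point, while the paper splits on whether $G$ meets the (finitely many) isolated points of $E$, but both identify a relatively open piece of $G$ with closure outside $I$ and finish via~\eqref{char}. For case~(2) you take a genuinely different route. The paper's argument is combinatorial: there are at most two points $x_1, x_2$ that can ever belong to a finite everywhere-big meager set, so after shrinking $G$ so that $\ol{G}$ avoids them, the transfinite decomposition remark after Proposition~\ref{prop} yields $\ol{G} = G \cup \bigcup_{n,m} F^n_m$ with each $F^n_m \in I$, whence ${\ol{G}}^*$ nonmeager but $\bigcup_{n,m} {F^n_m}^*$ meager forces $G^*$ nonmeager. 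You instead dichotomize on whether $\ol{G}$ is meager: if so, condition~(2) applied to the everywhere-big set $\ol{G}$ (which you correctly verify is everywhere big) forces $G = \ol{G}$ with at most two points and $G \notin I$; if not, $\ol{G}$ has nonempty interior $W$, $G$ is dense $\gd$ in $W$, and you run a Banach--Mazur game in $\F$, exploiting upward closure to insert a nearby point of $G \cap W$ into the current compact at each stage so the game converges to a $K \in \F$ containing a limit $y \in G$. That argument is sound, modulo one small imprecision: your ``(necessarily)'' is not forced --- a basic neighbourhood of $L$ inside $\W_{n-1}$ need not automatically carry a $V_j'$ inside $B(y_{n-1}, r_{n-1}/3)$ --- but since every $L \in \W_{n-1} \subseteq \V_{n-1}$ meets $B(y_{n-1}, r_{n-1}/3)$, Player~I may simply refine the neighbourhood by adjoining such a witness condition, so the strategy is easily repaired. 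Your Banach--Mazur subargument in fact establishes a cleaner general fact (for any dense $\gd$ subset $G$ of a nonempty open set and any nonempty closed upward-closed $\F$, $G^*$ is nonmeager in $\F$); the paper's route for case~(2) is shorter, trading the game machinery for a reuse of the decomposition remark, but both approaches are valid.
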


\begin{proof} Since $I$ has only meager sets,  $I$ has some closed and upward closed set representing set  $\F \subseteq \K(E)$.  Let $G \subseteq E$ be a $\gd$ set that is everywhere big with respect to $I$; we  show that $G^*$ is nonmeager in $\F$. 

Suppose that $E$ has only finitely many limit points. If $G$ contains any isolated point, then $G$ has nonempty interior and so $G^*$ is nonmeager in $\F$. On the other hand, if $G$ contains no isolated point of $E$, it must be a finite, and therefore closed, set (and it is nonempty). So we have $G=\ol{G} \notin I$, and  again $G^*$ is nonmeager in $\F$.

Coming to the other condition, suppose that every meager compact subset  of $E$ that is everywhere big with respect to $I$ has at most two points.  If $x \in E$ belongs to some such finite everywhere big meager set, then clearly $\{x\}$ is not in $I$. Note that  there can be at most two points altogether, say $x_1$ and $x_2$, that show up in such sets. (If there were three distinct points making an appearance in finite everywhere big meager sets, then the three points together would comprise another such set.) 

Take now our nonempty $\gd$ set $G$ which is everywhere big with respect to $I$. If $G$ contains one of the  $x_i$, then $\{x_i\}^* \subseteq G^*$, and so $G^*$ is not meager in $\F$. On the other hand, if $G$ contains no such $x_i$, then by replacing $G$ with a suitable nonempty relatively open subset of itself if necessary, we may assume that $\ol{G}$ contains no such $x_i$ either.  Now write $\ol{G}$ as
\[  \ol{G} =G  \cup  \bigcup_{n\in \bbn} F_n ,\]
where each $F_n$ is a relatively closed and relatively meager subset of $\ol{G}$. Each $F_n$ is also a closed meager  subset of $E$, and by the remark following Proposition \ref{prop}, can be written as
\[F_n = F'_n \cup \bigcup_m {F^n_m},\]
where $F'_n$ is either empty or a meager compact subset of $E$ that is everywhere big with respect to $I$, and each ${F^n_m}$ is in $I$. Since $F'_n$ is disjoint from $\{x_1, x_2\}$, it must simply be empty. We now have 
\[  \ol{G} =G  \cup  \bigcup_{n,m}{F^n_m},\]
and so
\[  {\ol{G}}^* =G^*  \cup  \bigcup_{n,m} {{F^n_m}}^*.\]
Since each ${{F^n_m}}^*$ is meager in $\F$, but ${\ol{G}}^*$ is nonmeager in $\F$, it must be the case that $G^*$ is nonmeager in $\F$.
\end{proof}

Note that the preceding proposition has covered the case of the ideal of meager compact sets (denoted $MGR(E)$), because this ideal satisfies the second condition: there are no nonempty meager compact subsets of $E$ that are everywhere big with respect to $I$.

We now turn to the  construction of a set $\F$ as in Theorem \ref{yay} in the case where $I$ has only meager set. The construction has two stages. First, we construct suitable representing sequences of closed and upwards closed sets for $I$. Second, we join these countably many sets into a single $\F$ that will satisfy the conclusion of Theorem \ref{yay}.
For ease of exposition we separate the first stage into a  proposition of its own. But first, a lemma:

\begin{lemma} \label{bigseq} Let $F$ be a meager compact subset of a Polish space $E$. Then we may find a sequence of pairwise disjoint regular closed sets $(F_n)$  converging to and disjoint from $F$. 
\end{lemma}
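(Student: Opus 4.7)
The plan is to build the $F_n$ recursively, defining each $F_n$ as the closure of a finite union of pairwise disjoint small open balls lying in $E \setminus F$, positioned so that together they $\eta_n$-approximate $F$ in Hausdorff distance for some $\eta_n \downarrow 0$.

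The key preliminary observation is that, since $F$ is closed and meager in the Polish (hence Baire) space $E$, $F$ has empty interior, so every open set in $E$ meets $E \setminus F$. Suppose inductively that disjoint regular closed $F_1, \ldots, F_{n-1} \subseteq E \setminus F$ have been constructed with $d_H(F_i, F) < 1/i$, and put $D := F_1 \cup \cdots \cup F_{n-1}$. Since $F$ is compact, $D$ is closed, and $F \cap D = \emptyset$, we have $\delta_n := d(F, D) > 0$ (setting $\delta_1 := 1$). I would fix $\eta_n < \min(1/n, \delta_n)$ and cover $F$ by finitely many open balls $B(x_j, \eta_n / 3)$, $j = 1, \ldots, k_n$, centred at points $x_j \in F$. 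Since $F$ has empty interior, each such ball contains a point $y_j \notin F$; then shrink radii $r_j > 0$ so that the closures $\ol{B(y_j, r_j)}$ lie inside $B(x_j, \eta_n/3) \setminus F$ and are pairwise disjoint (possible because each $y_j$ lies in the open set $B(x_j, \eta_n/3) \setminus F$). Finally, define
\[ F_n := \ol{\,\bigcup_{j=1}^{k_n} B(y_j, r_j)\,}. \]

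Verification would then be routine. Regular closedness of $F_n$ comes for free: writing $G_n := \bigcup_j B(y_j, r_j)$, we have $\mathrm{int}(F_n) \supseteq G_n$, so $\ol{\mathrm{int}(F_n)} \supseteq \ol{G_n} = F_n$, and the reverse inclusion is automatic. Disjointness of $F_n$ from $F$ and from $D$ follows from $F_n \subseteq B(F, \eta_n/3)$ together with $\eta_n < \delta_n$, which also gives disjointness from each previously constructed $F_i$. For Hausdorff convergence, every $x \in F$ lies within $\eta_n$ of some $y_j \in F_n$, and every point of $F_n$ is within $\eta_n/3$ of some $x_j \in F$, so $d_H(F_n, F) < \eta_n \to 0$.

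The one genuinely delicate point is the regular closedness: in a general Polish metric space, the metric closed ball $\{x : d(x,y) \leq r\}$ need not equal $\ol{B(y,r)}$, and can fail to be regular closed, so I would deliberately define $F_n$ as the closure of a union of \emph{open} balls rather than as a union of closed balls, which makes regular closedness automatic. All remaining work is bookkeeping — chiefly arranging the positivity of $d(F, D)$ at each stage — which is what keeps the induction running.
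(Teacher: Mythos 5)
Your proof is correct and follows essentially the same approach as the paper's: both exploit the empty interior of the compact meager set $F$ to find points near but outside $F$, then form each $F_n$ as the closure of a finite union of small open balls (which yields regular closedness automatically), recursively shrinking the approximation scale below $d(F,\,F_1\cup\cdots\cup F_{n-1})$ to keep the terms pairwise disjoint. The one cosmetic difference is that you additionally insist the small balls \emph{within} a given $F_n$ be pairwise disjoint, which is an unnecessary constraint (their union is open regardless), while the paper passes through an intermediate step of finite point-sets before fattening.
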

\begin{proof}
Since $F$ is meager, for each $\epsilon > 0$ we can find a finite nonempty set disjoint from $F$ whose Hausdorff distance from $F$ is less than $\epsilon$, and of course positive.  This allows us to successively construct a sequence of pairwise disjoint finite sets $F_n$ converging to and disjoint from $F$. By successively replacing each finite set $F_n$ with a set of the form $\ol{\bigcup_{x\in F_n} B(x,\delta_n)}$ for a suitably chosen positive $\delta_n$, we obtain  a sequence satisfying the required conditions. 
\end{proof}

\begin{prop} \label{main}
Let $E$ be a compact Polish space and let $I \subseteq \K(E)$ be a  $\gd$ ideal of compact sets, with property~$(*)$, containing only meager sets. Let $U$, $V$ be open subsets of $E$ with disjoint closures and suppose that $V$ contains a compact meager set $M$ that is everywhere big with respect to $I$. Then there exist  closed and upward closed sets  $\F_m \subseteq \K(E)$, $m \in \bbn$, such that: 
\begin{equation}
\forall  K \in  \K(U),\; K \notin I \implies \exists m \; \F_m \subseteq K^*;
\label{goal1}
\end{equation}
\begin{equation}
\forall  K \in  \K(E),\;	K \in I \implies \forall m \; K^*  \textrm{ is meager in } \F_m; 
\label{goal2}
\end{equation}
\begin{multline}
\forall  \text{ nonempty }G  \in  \gd(U),\\
\Big(
G \text{ is everywhere big w.r.t.}\ I 
\Big) 
\Rightarrow  \Big(\exists m\;G^* \text{ is comeager in } \F_m \Big).
\label{goal3}
\end{multline}
Additionally, 
\begin{equation}
\forall m \in \bbn, \;\F_m \subseteq  \ol{V}^*; \text{ and }
\label{location}
\end{equation}
\begin{equation}
\label{location2}
\forall m \in \bbn, \; \forall \text{open } W \subseteq E, \; \ol{V} \cup \ol{U} \subseteq W \implies  \K(W) \cap  \F_m \neq \emptyset.
\end{equation}
\end{prop}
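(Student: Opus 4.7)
My plan is to construct each $\F_m$ using the two pieces of structure available: the decreasing sequence $(\U_n)_{n \in \bbn}$ of open downward closed subsets of $\K(E)$ with $I = \bigcap_n \U_n$ satisfying the refinement property (\ref{dcs}), and the sequence $(M_i)$ of pairwise disjoint regular closed subsets converging to and disjoint from $M$ supplied by Lemma \ref{bigseq} (a tail of which we may assume sits inside $V$). Each $\F_m$ will be a closed upward closed family whose typical element is a compact set ``anchored'' at $\ol{V}$, either at $M$ itself or at one of its approximations $M_i$, and carrying a ``witness'' inside $\ol{U}$ certifying non-membership in $\U_m$.

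More concretely, I fix a countable basis $(W_j)$ for the topology of $U$ consisting of sets with closure in $U$, and take $\F_m$ to be the closed upward closure of a family $\mathcal{L}_m$ of compact sets of the form $M_i \cup N$, where $N \subseteq \ol{U}$ is built by a recursion controlled by (\ref{dcs}) that forces $N \cap \ol{W_j} \notin \U_m$ for every $j$ for which this is consistent with $\ol{W_j} \notin \U_m$. Conditions (\ref{location}) and (\ref{location2}) are then immediate from the construction: every generator meets $\ol{V}$, and $M \cup \ol{U}$ (or a finite variant sitting inside the given $W$) belongs to $\F_m$. Property (\ref{goal1}) will follow from the forced density of the $N$-component: given $K \in \K(U) \setminus I$, pick $m$ with $K \notin \U_m$ and a basic $W_j$ meeting $K$; by construction the witness component of any element of $\F_m$ is forced to meet $\ol{W_j}$, and hence the element itself meets $K$.

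For (\ref{goal2}) and (\ref{goal3}) I will verify meagerness and comeagerness of $K^*$ and $G^*$ in $\F_m$ via the Banach-Mazur game played in $\F_m$. Player II's strategy will be to refine her basic open set by adjoining a small new piece to the running witness, using (\ref{dcs}) to keep the union outside $\U_m$, while simultaneously steering this piece away from $K$ (for (\ref{goal2})) or into $G$ (for (\ref{goal3})). The hypothesis that $G$ is everywhere big with respect to $I$ provides, within each basic open subset of $U$ still in play, room for a compact piece of $G$ outside $\U_m$, so Player II's refinement can always be made to meet $G$. The $\ol{V}$-side of the construction uses the everywhere-big hypothesis on $M$ analogously, by allowing Player II to refine the $M_i$-anchor without losing her freedom.

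The principal obstacle is the tension between (\ref{goal1}) and (\ref{goal2}): the former demands that $\F_m$ be ``large and spread out'' enough that every element meets every non-$\U_m$ compact in $U$, while the latter demands that $K^*$ hit $\F_m$ only meagerly for each $K \in I$. Reconciling these is exactly where property (\ref{dcs}) is indispensable, since it permits adjoining arbitrarily small witnesses while remaining outside $\U_m$, so the ``spread'' requirement can be realized by many refinements rather than by a single rigid choice, leaving Player II of the Banach-Mazur game enough slack to dodge any prescribed $K \in I$. I expect the bulk of the technical work to be in setting up the recursion defining $\mathcal{L}_m$ precisely enough that both sides of this trade-off can be verified simultaneously.
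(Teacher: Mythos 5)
Your ingredients are the right ones---the decreasing sequence $(\U_n)$ with property (\ref{dcs}), the sequence $(M_i)$ supplied by Lemma \ref{bigseq}, the Banach--Mazur game---but the argument you sketch for (\ref{goal1}) has a genuine gap. You propose to build the $\ol{U}$-component $N$ so that $N \cap \ol{W_j} \notin \U_m$ for every basic $W_j$ for which this is consistent, and then argue that given $K \in \K(U)$ with $K \notin \U_m$, picking a basic $W_j$ meeting $K$ forces the generator to meet $K$. This inference fails: $N \cap \ol{W_j}$ being nonempty (or even $m$-big) in no way implies $N$ meets $K$; they can be disjoint compact subsets of $\ol{W_j}$. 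No amount of requiring $N$ to ``positively meet'' prescribed basic sets will force $N$ to meet an arbitrary $m$-big $K$. What the paper arranges instead is a covering-by-complement condition: every $F \in \A_m$ must satisfy $\ol{E \setminus F} \cap \ol{U} \subseteq \bigcup_{i=1}^p B_i$ with $\bigcup_i \ol{B_i} \in \U_m$. Then $F$ and $\bigcup_i \ol{B_i}$ together cover $\ol{U}$, hence cover $K$; since $K$ is $m$-big and $\U_m$ is downward closed, $K \not\subseteq \bigcup_i \ol{B_i}$, so $K \cap F \neq \emptyset$. Without this complement structure you have no way to get (\ref{goal1}).

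There is a second, related omission: the paper's coupling of the $V$-side of a member $F$ with the allowed size of its $\ol{U}$-complement. Each basic set $B$ is assigned a subsequence $(M(B,n))_n$ of $(M_i)$, with these subsequences partitioning the whole sequence, and the number of ``blanks'' of $F$ before $M(B,1)$ (indices $i$ with $F \cap M_i = \emptyset$) caps, via Cond.~3c, which $\U_k$ the tail $\bigcup_{i\ge j}\ol{B_i}$ must belong to. This coding is the engine of (\ref{goal3}): Player II's Banach--Mazur move restricts to compact sets avoiding a finite union $R$ of the $M_i$, which forces any $L$ in the restricted open set to have many blanks before the first term of any basic set not already in the given witness, and so caps the size of the ``new'' complement below the bigness threshold of $\ol{D'}$. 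Your generators of the form $M_i \cup N$ have no analogous lever: restricting the hyperspace neighbourhood gives Player II no control over the complements $\bigcup_i\ol{B_i}$ associated to sets in it, so the density argument for $G^*$ has nothing to grab onto. (A minor further point: the paper proves (\ref{goal2}) by direct nowhere-density---perturbing $F$ off $K$ inside an $\epsilon$-ball using regularity of the $M_i$---rather than by the game, and that perturbation again leans on the blanks bookkeeping.)
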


\begin{proof} 
Fix a decreasing sequence of open, downward closed sets $\U_n \subseteq \K(E)$ 
such that
\[I = \bigcap_n \U_n\]
and (\ref{dcs}) holds for the sets $\U_n$. For $K \in \K(E)$ and $n \in \bbn$, we will say that ``$K$ is $n$-small'' if $K \in \U_n$, else, we will say that ``$K$ is $n$-big''. Clearly $K \in I$ if and only if $K$ is $n$-small for all $n$, and as each $\U_n$ is downward closed, an $n$-big set cannot be contained in an $n$-small set. 

Fix a countable basis $\B$ for $E$, closed under finite unions.  By Lemma \ref{bigseq}, inside $V$ we may find a sequence of pairwise disjoint regular closed sets $(M_i)$, say, converging to and disjoint from $M$.  Note that the union of finitely many members of the sequence $(M_i)$ will have an open superset that is disjoint from all other members of the sequence. With each $B \in \B$ we associate a subsequence of $(M_i)$, whose members we shall denote $M(B,n)$, $n \in \bbn$, in such a way that for each $i$ the set $M_i$ appears in the sequence $(M(B,n))_{n}$ for exactly one set $B \in \B$. 

For $F \in \K(E)$, we introduce some useful terms. For $B\in \B$, we will say that ``$F$ allows $B$'' if $F \cap M(B,n)$ has nonempty interior  for every $n \in \bbn$. For $i \in \bbn$ we will say that ``$F$ is blank in $M_i$'' if $F \cap M_i = \emptyset$. We will  say that ``$F$ has $k$ blanks before $M_i$'' if $k$ of the sets $F \cap M_1$, $F \cap M_2$, $\ldots$, $F \cap M_{i-1}$ are empty.

Now for $m \in \bbn$, we will define a set $\A_{m} \subseteq \K(E)$ thus. For  $F \in \K(E)$, we will say that $F \in \A_{m}$ if the following conditions hold:
\begin{enumerate}[{Cond.} 1.]
	\item  There exists $r \in \bbn$ such that $F \cap M_i$ has nonempty interior for all $i \ge r$.  \label{condtail}
	\item The set  $F \cap M$ is not in $I$. \label{biglim}
	\item There exist $ p \in \bbn$ and $B_{1}, B_{2}, \ldots, B_{p} \in \B$, arranged in the order in which the first terms of their associated subsequences occur within the sequence $(M_i)$ (so that, for example, $M(B_1,1)$ occurs in $(M_i)$ earlier than  $M(B_2,1)$ does) such that: \label{def2}
	\begin{enumerate}[(\ref{def2}a)]
		\item $F$ allows each $B_{i}$; \label{conda}
		\item the union of the sets $\ol{B_{i}}$ is $m$-small, i.e., 
		\[ \bigcup^p_1  \ol{B_i} \in \U_m;\] \label{condb}
		\item for each $j = 1, \ldots, p$, if $F$ has $k_j$ blanks before $M(B_j,1)$ then 
		\[ \bigcup^p_{i=j}  \ol{B_i} \; \in \; \U_{k_j};\] \label{condc}
		\item finally, \[\ol{E \setminus F} \cap \ol{U} \; \subseteq \; \bigcup^p_{i=1} B_i.\] \label{lastcond}
	\end{enumerate}
\end{enumerate}

We will say that the tuple $\langle B_1, B_2, \ldots, B_p, r \rangle$ witnesses the membership of $F$ in $\A_{m}$. Note that Cond.~\ref{condtail} implies that $F$ allows all but finitely many basic sets. Note also that from Cond.~\ref{lastcond} we have that $F$ and the sets $B_i$ cover $\ol{U}$.

It is easy to see that  $\A_{m}$ is upward closed. (If some set $F$ in $\A_{m}$ has witness $\langle B_1, B_2, \ldots, B_p, r \rangle$, then the same witness will work for any compact superset of $F$. Here we make use of the fact that the sequence $(\U_n)$ is decreasing.)

Set $\F_{m} = \ol{\A_{m}}$. This is still  an upward closed set. 

Our goals are now to show that (\ref{goal1}) -- (\ref{location2}) hold. We start with the last two. Certainly (\ref{location}) is immediate (each $F$ in $\F_m$ intersects $M$).  To see (\ref{location2}), fix  $m \in \bbn$ and fix an open superset $W$ of $\ol{V} \cup \ol{U}$. We want to show that  $\K(W) \cap  \F_m \neq \emptyset$. Let $W'$ be  open  such that $\ol{V} \cup \ol{U} \subseteq W' \subseteq \ol{W'}\subseteq W.$ It is clear that the set $\ol{W'}$ satisfies  
Cond.~ \ref{condtail} and Cond.~ \ref{biglim}, and that $\ol{W'}$ allows all basic sets and has no blanks at all in ${(M_i)}$. Since 
\[\ol{E \setminus \ol{W'}} \cap \ol{U} = \emptyset,\] 
we can simply take any basic set $B$ whose closure is in $\U_m$ to get a witness $\langle B, 1 \rangle$ for the membership of $\ol{W'}$ in $\A_m$. (Certainly $\U_m$ will  contain the closures of some basic sets, just because any member of the   open set  $\U_m$ is the limit of a sequence of  the closures of basic sets.)

We move on to   (\ref{goal1}). Fix a  compact set $K \subseteq U$ not in $I$, and fix  $m_0$ such that $K \notin \U_{m_0}$. For $F \in \A_{m_0}$, we have $\ol{E \setminus F} \cap \ol{U} \subseteq \bigcup^p_1 B_i$ for some basic sets $B_i$ satisfying the condition that  $\bigcup^p_1  \ol{B_{i}}$ is $m_0$-small.  The sets $F$ and $\bigcup^p_1  \ol{B_{i}}$ cover $\ol{U}$ and therefore cover $K$. Since $K$ is $m_0$-big it cannot fit inside $\bigcup^p_1  \ol{B_{i}}$, and so must intersect $F$. Thus $\A_{m_0} \subseteq K^*$; the latter being closed we also get 
$\F_{m_0} \subseteq K^*$.

Next, (\ref{goal2}). Fix  $K \in I$,  $m \in\bbn$ and $\epsilon > 0$.  Let $F \in \A_{m}$ and let  the tuple $\langle B_1, B_2, \ldots, B_p, r \rangle$ witness the membership of $F$ in $\A_{m}$. We will find a set in  $\A_{m} \setminus K^*$  within $\epsilon $ of $F$; this will suffice to show that $K^*$  is nowhere dense, and hence meager, in  $\F_{m}$.

The sets $B_i$ satisfy the condition that  
\[\bigcup^p_{i=1}  \ol{B_{i}} \in \U_m,\] 
and they also satisfy the following $p$ many conditions: for each $j = 1, \ldots, p$, if $F$ has $k_j$ blanks before $M(B_j, 1)$ then 
\[ \bigcup^p_{i=j}  \ol{B_i} \; \in \; \U_{k_j}.\] 

We may now pick a $q \in \bbn$ large enough so that:
\begin{itemize}
\item we can add a $q$-small set to $\bigcup^p_1  \ol{B_{i}}$ and stay in $\U_m$;
\item for each $j = 1, \ldots, p$, we can add a $q$-small set to $\bigcup^p_{j} \ol{B_i}$ and stay in $\U_{k_j}$;
\item if $F$ has a total of $t$ blanks in the sequence $(M_i)$, then $q > t$.
\end{itemize}

Now, $F\cap M \nsubseteq K$ because $F\cap M \notin I$. Let  $ x\in F\cap M\setminus K$. Pick a positive $\delta < \epsilon$ such that $\ol{B(x, \delta)}$ is disjoint from $K$. Since  $M$ is in  $B(x, \delta)^*$, which is an open subset of $\K(E)$, we may fix $i_0 \in \bbn$ such that $\forall i \ge i_0, \; M_i \in B(x, \delta)^*$, i.e., $M_i \cap B(x, \delta) \neq \emptyset$. As the interior of each  $M_i$ is dense in it, we have $\forall i \ge i_0, \; (M_i)^\circ \cap B(x, \delta) \neq \emptyset$.

Now consider the finitely many sets $M_i$ for  $i < i_0$. For each of these sets $M_i$ we  define an open subset $V_i$ thus. If $F \cap M_i \neq \phi$, then, recalling that $M_i$ is regular and $K$ is meager, we  fix a nonempty open set $V_i \subseteq M_i$  such that $\ol{V_i} \subseteq \bigcup_{y\in F} B(y,\delta)$ and $\ol{V_i} \cap K = \emptyset$. 
If on the other hand $F \cap M_i = \phi$, then we simply set $V_i = \emptyset$.

Now consider a sequence of basic open sets containing $K$ whose closures converge to $K$. As $K$ belongs to the open set $\U_q$, these closures eventually lie in $\U_q$. Pick a member $B$ of this sequence such that:
\begin{itemize}
\item  $M(B,1)$ occurs in the sequence $(M_i)$ \emph{after} $M_{i_0}$ and \emph{after} $M(B_p, 1)$, and
\item $\ol{B} \in \U_q$.
\end{itemize}
Now let $B'$ be an open set such that $K \subseteq B'\subseteq \ol{B'} \subseteq B$, and set \[F' = (F \setminus B') \ \cup\  \ol{B(x, \delta)} \  \cup \ \bigcup_{i<i_0}\ol{V_i},\] 
which is clearly disjoint from $K$. We now claim that $F'$ is in $\A_m$. 

Cond. \ref{condtail} is satisfied as $\ol{B(x, \delta)} \cap M_i $ has nonempty interior for all $i \ge i_0$. Cond.~\ref{biglim}  is satisfied as $\ol{B(x, \delta)} \cap M$  contains  $\ol{B(x, \delta) \cap M}$, which is not in $I$ as $M$ is everywhere big with respect to $I$. 

The tuple $\langle B_1, B_2, \ldots, B_p, B, i_0 \rangle$ witnesses the membership of $F'$ in $\A_{m}$. To see this, first note that $F' \cap M_i$ has nonempty interior for every $i$ where $F \cap M_i$ had nonempty interior, so  $F'$ allows any basic set that $F$ allowed, including each $B_j$. The basic set $B$ is allowed as the entire sequence $(M(B,n))_n$ lies  in the $i_0$-tail of $(M_i)$. Hence Cond.~(\ref{conda}) holds.  Cond.~(\ref{condb}) holds for this witness by choice of $q$. Cond.~(\ref{condc}) now consists of $p+1$ conditions. For $j=1, \ldots, p$, if $F'$ has $k'_j$ blanks before $M(B_j, 1)$, then 
$k'_j \le k_j$, and by choice of $q$ we have 
 $\bigcup^p_j  \ol{B_i} \cup \ol{B} \in \U_{k_j} \subseteq \U_{k'_j}$. 
The $p+1$st condition is that we must have $\ol{B} \in \U_{k}$, where $k$ is the number of blanks that $F'$ has before $M(B,1)$. Now, $F'$ can have no more than $t$ blanks before $M(B,1)$ because it has no more than $t$ blanks anywhere, and thus  $\ol{B} \in \U_q \subseteq \U_t \subseteq \U_{k}$. So Cond.~(\ref{condc}) holds. Finally, for Cond.~(\ref{lastcond}) note that, by definition of $F'$, we have $F \setminus B' \subseteq F'$, and thus $E \setminus F' \subseteq E \setminus (F \setminus B' ) = B' \cup E \setminus F$. So
 \[  \ol{U} \cap  \ol{E \setminus F'} \   \subseteq \ 
  \ol{U}  \cap \Big[\ol{B'} \cup \ol{E \setminus F} \Big] 
\   \subseteq  \ 
 \ol{B'} \cup \bigcup^p_{i=1}  {B_{i}} 
 \   \subseteq \   B \cup \bigcup^p_{i=1}  {B_{i}}.\] 
  
So we have $F' \in \A_m \setminus K^*$. It remains to examine the distance of $F'$ from  $F$. We first note that the set $F \cup \ol{B(x, \delta)} \cup \ \bigcup_{i<i_0}\ol{V_i}$ is within $\epsilon$ of $F$. (This follows from the fact that  $x \in F$ and $\delta < \epsilon$, and by the choice of $V_i$'s.) In forming the set $F'$  we have removed some points from this set  and may have obtained a set that is more than $\epsilon$ away from $F$. Recalling again that $K$ is meager and $\A_m$ is upward closed, this can be remedied by adding to $F'$ a finite set of points, all outside $K$, to obtain a set in $\A_m \setminus K^*$ once again within $\epsilon$ of the original set $F$.

Thus we have shown that $K^*$ is meager in  $\F_m$. 

Having establised that the sets $\F_m$ determine membership of compact sets $K$ in $I$,  we now address  (\ref{goal3}):

Let $G \subseteq U$ be a nonempty $\gd$ set that is everywhere big with respect to $I$. Let $(H_n)$ be an increasing sequence of closed sets that are relatively meager in $\ol{G}$ such that  $G = \ol{G} \setminus \bigcup_n H_n$. 

Fix $m_0$ such that  $\ol{G} \notin \U_{m_0}$, so that $\F_{m_0} \subseteq \ol{G}^*$.  In order to show that $G^*$ is comeager in $\F_{m_0}$, we will play the Banach-Mazur game in $\F_{m_0}$, on the set $G^*$; we will describe a winning strategy for Player II, i.e., if Player I is playing sets $\V_n$ and Player II is playing sets $\W_n$, (all open subsets of $\F_{m_0}$ satisfying the inclusions of the game) we will show that $\bigcap_n \W_n \subseteq G^*$.

Player I starts the game by playing some $\V_1$.  If $F$ is a set in   $\V_1 \cap \A_{m_0}$ with witness $\langle B_1, \;B_2,\; \ldots B_p, r\rangle$,  then we know that $\ol{G}\setminus \bigcup^p_1  \ol{B_i}$
is nonempty. (In fact, this is why we know that $\ol{G}$ intersects $F$.)  If Player II can construct $\W_1$, a further nonempty open subset of $\F_{m_0}$, and $D_1$, a nonempty relatively open subset of $\ol{G}$, such that
\begin{itemize}
	\item $\ol{D_1} \cap {H_1} = \emptyset$, and
	\item  $\W_1  \subseteq \ol{D_1}^*$,
\end{itemize}
and if Player II can keep going in this way, producing with each successive $\W_n$ that it plays, an open 
 subset ${D_n}$ of $\ol{G}$, such that
 \begin{itemize}
	\item $D_n \subseteq D_{n-1}$ for $n>1$,  
	\item $\ol{D_n} \cap {H_n} = \emptyset$, and
	\item   $\W_n \subseteq \ol{D_n}^*$,
	\end{itemize}
then Player II will have a winning strategy. 

Therefore it suffices to show the following.

\textbf{Claim:} 
Let $D$  and  $H$  be nonempty  subsets of $\ol{G}$, with $D$ relatively open in $\ol{G}$, and $H$ relatively meager and  closed in $\ol{G}$. Suppose that  $\V$ is  a nonempty relatively open subset of $\F_{m_0}$ such that for any $F \in \V \cap \A_{m_0}$, if  $\langle  B_1, B_2, \ldots, B_p, r \rangle$ witnesses the membership of $F$ in $\A_{m_0}$, then 
	\[\ol{D}\setminus {\bigcup_{j=1}^p \ol{B_{j}}} \neq \emptyset.\]
Then there exists a further nonempty relatively open subset $\V'$ of $\F_{m_0}$, contained in $\V$, and  a further nonempty relatively open subset $D'$ of $\ol{G}$, such that $\ol{D'} \subseteq D \setminus H$ and,  for any $F \in \V' \cap \A_{m_0}$, if  $\langle  B_1, B_2, \ldots, B_p, r \rangle$ witnesses the membership of $F$ in $\A_{m_0}$, then 
	\[\ol{D'}\setminus {\bigcup_{j=1}^p \ol{B_{j}}} \neq \emptyset.\]
(This then implies that $F \in \ol{D'}^* $, and thus we have  $\V'  \subseteq \ol{D'}^*$.)

We prove the claim.

Without loss of generality, we may assume that $\V$ has the form 
\[\V= \{K \in \F_{m_0}: K \subseteq V_0,\; K \cap V_i \neq \emptyset \ \;\forall i=1,\ldots, l\},\] where the sets $V_1, \ldots, V_l$ are nonempty open subsets of the open set $V_0$. We can further assume that each $V_i$ is an $\epsilon$-ball for some fixed $\epsilon$.

Fix $F \in \V \cap \A_{m_0}$, with witness $\langle  B_1, B_2, \ldots, B_p, r_F \rangle$, say. Since the open set 
\[E\setminus {\bigcup_{j=1}^p \ol{B_{j}}} \] 
intersects $\ol{D}$, it intersects ${D}$. As $H$ is meager in $\ol{G}$, within ${D}$ we can find a further open subset of $\ol{G}$, say $D'$, such that 
\[\ol{D'} \; \subseteq \; D \setminus \Big(\bigcup_{j=1}^p \ol{B_{j}} \cup H \Big).\]
Since ${G}$ is everywhere big with respect to $I$, we know that ${\ol{D'}} \notin I.$ Fix a $k$ such that $\ol{D'} \notin \U_{k}$.

For each  $j=1, \ldots, p$, we have that $F$ allows $B_j$. In the remainder of the proof, the idea is to find an open neighbourhood of $F$ in $\K(E)$ within which the specific sets $B_1, \ldots, B_p$ remain allowed, but basic sets $B$ other than these are not allowed unless the closure of their union is $k$-small. This will force any member of $\A_{m_0}$ in this neighbourhood to intersect $\ol{D'}$, which is $k$-big. 

Recall that $B_p$ is the last basic set in the witnessing tuple. Pick $r$ such that 
\begin{itemize}
	\item $M_r$ appears in $(M(B_p, n))_n$, the subsequence of $(M_i)$ associated with $B_p$;
	\item $r$ is large enough so that for  $i\ge r$, the Hausdorff distance between $M_i$ and $M$ (the limit of $(M_i)$) is less than $\epsilon$. 
\end{itemize} 

There are up to $r$ many  basic sets $B$ whose associated subsequences have appeared within the initial $r$ terms of the sequence $(M_i)$. These include the sets $B_1, \ldots, B_p$. For each of these finitely many basic sets $B$ \emph{other than} $B_1, \ldots, B_p$, find one member of the associated sequence that occurs \emph{after} $M_r$ and tag it. Now again find  $r' \in \bbn$ such that $M_{r'}$ lies within the subsequence $(M(B_p, n))_n$ and also $M_{r'}$ occurs \emph{after} all the up to $r-p$ tagged sets. Ensure also, by choosing a larger $r'$ if necessary, that there are at least $k$ sets $M_i$ with $r<i<r'$ that do not occur in $(M(B_j, n))_n$  for any  $j=1, \ldots, p $. Now let $R$ be the union of all the sets $M_i$ for $r<i<r'$ \emph{except} for those that are associated with the specific basic sets $B_1, \ldots, B_p$., i.e.,
\[R = \bigcup \big\{ M_i :  r <i<r', M_i \text{ does not occur in }(M(B_j, n))_n \text{ for  } 1\le j \le p\big\}.\]  
$R$ is a closed set that contains all the tagged sets and it is a finite union of at least $k$ many members of $(M_i)$. It is disjoint from all members of $(M_i)$ that it does not contain, and also from $M$.  We also know that $R$ cannot completely contain any $\epsilon$-ball. (Any such ball that it did contain would be forced to intersect $M$ as well as be disjoint from $M$.)   In particular, $R$ cannot completely contain any of the sets $V_i$. For $1 \le i \le l$, fix $y_i$ in $V_i \setminus R$.

Set \[\V' = \K( V_0 \setminus R) \cap \V.\] 
We show that the sets $D'$, $\V'$ satisfy all the required conditions. We already have $\ol{D'} \subseteq D \setminus  H$. 
We now show that $\V' \neq \emptyset $. Since $F \in \V$ we have that  $F \subseteq V_0$. 
Let $O$ be an open superset of $R$ so chosen that it does not contain any of the points $y_i$ for $i = 1, \ldots l$ and also so that it does not intersect any set $M_j$ that is disjoint from $R$.  We may also ensure that $\ol{O} \cap \ol{U} = \emptyset$, as $R$ is a closed set disjoint from $\ol{U}$. Now let 
\[F'=(F \setminus O) \cup \{ y_i: i=1, \ldots l\}.\]
It is clear that $F' \subseteq V_0 \setminus R$ and $F'\cap V_i \neq \emptyset$ for $1 \le i \le l$ . To show that $F' \in \A_{m_0}$, it suffices to show that $F \setminus O \in \A_{m_0}$, since the addition of points does not affect membership in  $\A_{m_0}$. First note that since $O$ meets only finitely many of the sets $M_i$, the sequence $ {(M_i \cap (F \setminus O))}_i$ still has a tail with nonempty interiors (starting at say $r_{F'}$), so that Cond.~\ref{condtail} holds. Next, note that $F \setminus O$ allows each of $B_1, \ldots, B_p$, which already satisfy Cond.~(\ref{condb}) and Cond.~(\ref{condc}), and that 
\[\ol{E \setminus (F\setminus O)} \cap \ol{U} \ = \ \ol{E \setminus F} \cap \ol{U}.\]
Therefore the tuple $\langle  B_1, B_2, \ldots, B_p, r_{F'} \rangle$ witnesses the membership of $F\setminus O$ in $\A_{m_0}$.
Thus $\V'$ is nonempty. 

We now show that  for any $L \in \V' \cap \A_{m_0}$, if the membership of $L$ in $\A_{m_0}$ is witnessed by $\langle  B'_1, B'_2, \ldots, B'_{p'}, r_L \rangle$, then 
	\[\ol{D'}\setminus {\bigcup_{j=1}^{p'} \ol{B'_{j}}} \neq \emptyset.\]
	 So let $L \in \V' \cap \A_{m_0}$. What are the basic sets $B$ that $L$ can allow? (This is the d\'enouement!) If $B$ is not one of the original $B_i$ obtained from the set $F$, then $M(B,1)$ cannot occur before $M_r$. (If it did, some member of its associated sequence is a tagged set included in $R$, but $L$ is disjoint from $R$.) $M(B,1)$  cannot occur between $M_r$ and $M_{r'}$ as in this range the only $M_i$ remaining in $V_0 \setminus R$ were part of the sequences associated with the original $B_i$. Therefore the  sequence associated with $B$ is entirely contained in the $r'$-tail of $(M_i)$. 
	
	With this in mind, any witness of the membership of $L$ in $\A_{m_0}$ must have the form $\langle  B_{i_1}, B_{i_2}, \ldots, B_{i_s}, B'_1, B'_2, \ldots, B'_{s'}, r_L \rangle$, where the sets $B_{i_1}, B_{i_2}, \ldots, B_{i_s}$ are taken from $B_{1}, B_{2}, \ldots, B_{p}$ and  $M(B'_1,1)$ lies in the $r'$-tail of $(M_i)$. 
	
	Note also that $L$, missing the whole of $R$, has at least $k$ many blanks occuring before $M(B'_1,1)$ in the sequence $(M_i)$. Therefore by Cond.~(\ref{condc}),
	\[\bigcup_{j=1}^{s'} \ol{B'_{j}} \; \in \; \U_{k},\]
	while $\ol{D'} \notin \U_k$. We also know that the whole of $\ol{D'}$ lies outside  $\bigcup_{j=1}^p \ol{B_{j}}$. So 
	\[\ol{D'}\setminus \Big( \bigcup_{j=1}^{s} \ol{B_{i_j}} \; \cup \; \bigcup_{j=1}^{s'} \ol{B'_{j}} \Big) \; = \; \ol{D'}\setminus \bigcup_{j=1}^{s'} \ol{B'_{j}} \;\neq \; \emptyset.\]

Thus the sets $\V'$ and $D'$  satisfy all the required conditions, and the Claim holds, completing the proof. 
\end{proof}

We now wish to construct a single upward closed set $\F$ satisfying Theorem \ref{yay} for ideals that contain only meager sets. We first  find pairs of disjoint open sets $U_n, V_n$ such that the sets $U_n$ cover $E$ and for each pair $U_n, V_n$, we can apply  Proposition \ref{main} to get  representing sequences ${(\F^n_m)}_m$. We then want to obtain a single representing set $\F$ that does the job for the whole space $E$. The idea is that we form $\F$ by taking some manner of union of all the  $\F^n_m$, in such a way that for a fixed $n,m$  we are able to find an open subset of $\K(E)$ within which all the members of $\F$ come only from this particular $\F^n_m$. The essence of the construction of $\F$ is that we marry each $\F^n_m$  to a particular `permission set' before throwing them all into a common union, in such a way that permission sets can be excluded or included from our desired open set as needed.

We proceed. 

\begin{thm} \label{yay1} 
	Let $E$ be a compact Polish space and let $I \subseteq \K(E)$ be a  $\gd$ ideal of compact sets, with property~$(*)$, containing only meager sets. Then there exists a compact upward closed set $\F \subseteq \K(E)$ such that any $\gd$ subset $G$ of $E$ is covered by countably many sets in $I$ if and only if $G^*$ is meager in $\F$.
	\end{thm}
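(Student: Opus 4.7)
By Proposition~\ref{special} I may restrict to the case in which $E$ has infinitely many limit points and $E$ contains some compact meager set with at least three points that is everywhere big with respect to $I$. By Proposition~\ref{prop}, it then suffices to produce a compact upward closed $\F \subseteq \K(E)$ for which condition~(\ref{char}) holds and $G^*$ is nonmeager in $\F$ whenever $G$ is a nonempty $\gd$ set everywhere big with respect to $I$.

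The first, local stage is to build a countable open cover $(U_n)_{n\in\bbn}$ of $E$ together with open sets $(V_n)_{n\in\bbn}$ satisfying $\ol{U_n}\cap\ol{V_n}=\emptyset$, such that each $V_n$ contains a compact meager set $M_n$ that is everywhere big with respect to $I$. The $M_n$ are obtained by localizing the assumed seed everywhere-big meager set inside different regions of $E$; the infinite-limit-point hypothesis gives enough room to pick infinitely many disjoint such pairs with the $U_n$ covering $E$. For each $n$, I apply Proposition~\ref{main} to $(U_n,V_n)$ to obtain a sequence $(\F_m^n)_{m\in\bbn}$ of closed, upward closed subsets of $\K(E)$ satisfying (\ref{goal1})--(\ref{location2}).

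The global stage glues these sequences via permission sets. Fix pairwise distinct marker points $p_{n,m}\in E$ converging to a single point $p_\infty\in E$, placed in a topologically convenient position so that each $p_{n,m}$ has a small open neighborhood $W_{n,m}$ missing every other marker, $p_\infty$, and $\ol{V_{n'}}$ for all $n'$. Let $P_{n,m}=\{p_{n,m},p_\infty\}$ be the permission set for slot $(n,m)$, and marry $\F_m^n$ with $P_{n,m}$ by setting $\tilde{\F}_m^n=\{F\cup P_{n,m}:F\in\F_m^n\}$. Then take
\[\F \;=\; \bigl\{K\in\K(E):K\supseteq L \text{ for some } L\in\ol{\textstyle\bigcup_{n,m}\tilde{\F}_m^n}\bigr\},\]
the upward closure in $\K(E)$ of the topological closure of the union of the married families. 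Since the upward closure of a compact subset of $\K(E)$ is itself compact, $\F$ is compact, and upward closed by construction. The crucial feature is that a Vietoris-open set of the form $\{K\in\K(E):K\cap W_{n,m}\neq\emptyset\}$ intersects $\F$ only in sets tracing back to slot $(n,m)$: by (\ref{location}), members of $\F_{m'}^{n'}$ for $(n',m')\neq(n,m)$ live near $\ol{V_{n'}}$ and so cannot meet $W_{n,m}$. This supplies a mutually separating family of open subsets of $\F$, one per slot, within each of which $\F$ looks essentially like $\tilde{\F}_m^n$ modulo upward closure.

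Verification via Proposition~\ref{prop} then transfers properties slot by slot. For $K\in I$, (\ref{goal2}) gives $K^*$ meager in each $\F_m^n$ and hence in each $\tilde{\F}_m^n$, which transfers to meagerness in $\F$ through the isolating opens. For compact $K\notin I$, a finite-subcover argument combined with property~$(*)$ yields a compact subset of $K$ contained in some $U_n$ and not in $I$; then (\ref{goal1}) supplies $m$ with $\F_m^n\subseteq K^*$, and the slot-$(n,m)$ isolating open set of $\F$ sits inside $K^*$. For a nonempty everywhere-big $\gd$ set $G$, some $G\cap U_n$ is nonempty and still everywhere big with respect to $I$, so (\ref{goal3}) produces $m$ with $(G\cap U_n)^*$ comeager in $\F_m^n$, hence $G^*$ nonmeager in the corresponding slot of $\F$. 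The hard part will be the delicate bookkeeping in the gluing step: ensuring that the markers $p_{n,m}$ and $p_\infty$ can be placed so slot-isolation genuinely separates the $\tilde{\F}_m^n$'s, and that the boundary sets added by the topological closure and upward closure---together with the possibility that some markers may lie in sets from $I$---do not create unintended open subsets of $K^*$ inside $\F$ that would spoil the meagerness verification for $K\in I$.
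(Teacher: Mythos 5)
Your overall architecture—reduce via Proposition~\ref{special} and Proposition~\ref{prop}, localize via Proposition~\ref{main} to get sequences $(\F^n_m)$, then glue with permission sets so each slot has an isolating Vietoris-open—is the same as the paper's. But your specific design of the permission sets is unsound, and this is not a bookkeeping worry that can be deferred: it breaks condition~(\ref{char}).

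You marry $\F^n_m$ to $P_{n,m}=\{p_{n,m},p_\infty\}$ by setting $\tilde{\F}^n_m=\{F\cup P_{n,m}:F\in\F^n_m\}$, and then close up and take the upward closure. Since ``contains the point $p_\infty$'' is a closed and upward-closed condition on $\K(E)$, \emph{every} member of your $\F$ contains $p_\infty$, so $\{p_\infty\}^*\cap\F=\F$ and $\{p_\infty\}^*$ is comeager (hence nonmeager) in $\F$. But $p_\infty$ is a limit of the distinct points $p_{n,m}$, hence a nonisolated point, hence $\{p_\infty\}$ is a meager compact set. For a great many $I$ covered by the theorem, $\{p_\infty\}\in I$: concretely take $E=[0,1]$ and $I$ the ideal of compact Lebesgue-null sets, which contains only meager sets, has property $(*)$, admits a fat Cantor set as a compact meager set with at least three points that is everywhere big with respect to $I$ (so Proposition~\ref{special} does not intercept it), and contains every singleton. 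Then~(\ref{char}) demands $\{p_\infty\}^*$ be meager in $\F$, which is impossible. There is no ``topologically convenient position'' for $p_\infty$ that avoids this: whether $\{p_\infty\}\in I$ is a property of the ideal, not the topology, and in this example no point of $E$ escapes $I$. The same defect recurs locally with each marker $p_{n,m}$, since within the slot-$(n,m)$ isolating open every member contains $p_{n,m}$, so any $K\in I$ through $p_{n,m}$ has $K^*$ covering that open.

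The paper's fix is to make the permission sets \emph{open}, not singletons, and to intersect rather than adjoin: it picks pairwise separated nonempty open sets $P^n_m$ (inside auxiliary opens $W_n$) and sets $\B^n_m=\F^n_m\cap(P^n_m)^*$, then $\F=\ol{\bigcup_{n,m}\B^n_m}$. Openness of $P^n_m$ is exactly what rescues~(\ref{goal22}): given $K\in I$ and $F\in\B^n_m$, one first uses~(\ref{goal2}) to find $F'\in\F^n_m\setminus K^*$ near $F$, and if $F'$ misses $P^n_m$ one adds a point of $P^n_m\setminus K$ (which exists because $K$ is meager and $P^n_m$ is open) to land back in $\B^n_m$ while staying outside $K^*$. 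Your construction has no such escape, since a singleton permission set gives no room to dodge $K$. A secondary issue: you propose infinitely many pairs $(U_n,V_n)$ with $M_n\subseteq V_n$ localized from the seed set $M$. If $M$ is a three-point set you cannot produce infinitely many pairwise separated $V_n$ meeting $M$; the paper uses exactly three carefully arranged pairs, which suffice because $E$ is compact and three sets admit the required disjointness pattern among $U_i,V_i,W_i$. This second point is repairable (pass to a finite subcover), but the permission-set design is the essential gap.
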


\begin{proof}   
By Proposition \ref{prop}, it suffices to construct  a  set $\F \subseteq \K(E)$, closed and upward closed, such that 
\begin{equation}
\forall  K \in  \K(E),\; K \notin I \implies K^*  \text{ is nonmeager in } \F;
\label{goal11}
\end{equation}
\begin{equation}
\forall  K \in  \K(E),\;	K \in I \implies K^*  \textrm{ is meager in } \F; 
\label{goal22}
\end{equation}
\begin{multline}
\forall  \text{ nonempty }G  \in  \gd(E),\\
\big(
G \text{ is everywhere big w.r.t.}\ I 
\big) \implies  \big(G^* \text{ is nonmeager in } \F \big).
\label{goal33}
\end{multline}

By Proposition \ref{special}, we may assume that  $E$ has infinitely many limit points, and  $E$ has  some meager compact subset  with at least three points that is everywhere big with respect to $I$. 
We first construct nonempty open subsets $U_1, U_2, U_3$ and $V_1, V_2, V_3$  of $E$ such that 
\begin{enumerate}
\item the sets $U_1, U_2, U_3$  form an open cover of $E$; 
\item the following strict inclusions hold:
\[\phantom{movemo}\ol{V_1}  \subsetneq U_2  \setminus  (\ol{U_3} \cup \ol{U_1}),\; \ol{V_2} \subsetneq U_3 \setminus  (\ol{U_1} \cup \ol{U_2}), \; \ol{V_3} \subsetneq U_1 \setminus  (\ol{U_2} \cup \ol{U_3}),\]
and, in each of the three set inclusions above, if the smaller set is removed from the larger one then at least one limit point of $E$ is left behind; 
\item for each $n=1, 2, 3$, there exists a sequence of closed and upward closed sets ${(\F^n_m)}_m $, such that  the sets $U=U_n$, $V=V_n$ and $\F_m = \F^n_m$ all satisfy (\ref{goal1}),  (\ref{goal2}),  (\ref{goal3}),   (\ref{location}), and  (\ref{location2}).
\end{enumerate}  
To do this, fix a meager compact subset $M$ of $E$, with at least three points, that is everywhere big with respect to $I$. We may assume that $E \setminus M$ contains at least three limit points of $E$. (Why? If  $M$  contains all but perhaps two of the infinitely many limit points of $E$, we can pick a suitable relatively open subset $U$ of $M$ such that $\ol{U}$ contains at least three of these points and omits another three. The set $\ol{U}$ is  still  both  meager and  everywhere big with respect to $I$, so we can replace $M$ with $\ol{U}$.) Let us say that  $a_1$, $a_2$, and $a_3$ are limit points of $E$ not in $M$. 

Since $M$ contains at least three points, we may find open sets $V_1, V_2, V_3$ with disjoint closures, all of which intersect $M$, and none of which contains any point $a_i$. Now let $U_1$ and $U_2$ be open sets with disjoint closures such that 
\[U_1 \supseteq \ol{V_3} \cup \{a_3\}, \; U_2 \supseteq \ol{V_1} \cup \{a_1\}, \;\text{and}\; \big(\ol{U_1} \cup \ol{U_2}\big) \cap\big(\ol{V_2} \cup \{a_2\}\big) = \emptyset,\]
and let $U_3$ be an open set such that 
\[ \ol{U_3} \cap \big(\ol{V_1} \cup \ol{V_3} \cup \{a_1, a_3\} \big)  = \emptyset  \; \text{and}\;  U_1 \cup U_2\cup U_3 = E.\]

Now note that the closure of any nonempty open subset of $M$ retains the properties of being meager and being everywhere big with respect to $I$. So for each $n$, since  $V_n$ intersects $M$,  we may find a  set $M_n  \subseteq V_n$ such that $M_n$ is a meager compact set that is everywhere big with respect to $I$.  For each $n$, the sets $U=U_n$, $V=V_n$, and  $M=M_n$ now satisfy the hypothesis of  Proposition \ref{main}, which we apply to get a sequence ${(\F^n_m)}_m$ as described in that proposition. 

It remains to construct the single set $\F$. Having obtained the sets $U_i$ and $V_i$, now fix open sets $W_1$, $W_2$, $W_3$, each containing a limit point, such that 
 \[
 \ol{W_1} \subseteq U_3 \setminus  (\ol{U_1} \cup \ol{U_2} \cup \ol{V_2}), 
 \;  
 \ol{W_2} \subseteq U_1 \setminus  (\ol{U_2} \cup \ol{U_3}\cup \ol{V_3}), \;
 \ol{W_3} \subseteq U_2  \setminus  (\ol{U_3} \cup \ol{U_1}\cup \ol{V_1}).
 \]

Note that we have ensured that if $\{i,j,k\} = \{1,2,3\}$, the sets $\ol{U_i}$, $\ol{V_i}$, and $\ol{W_i}$ are pairwise disjoint and their union $\ol{U_i} \cup \ol{V_i} \cup \ol{W_i}$ is disjoint from either $\ol{V_j} \cup \ol{W_k}$ or from $\ol{V_k} \cup \ol{W_j}$.

We now build `permission sets' inside the sets $W_n$ thus.  Inside each open set $W_n$ we may find a sequence of nonempty open sets  ${({P^n_i})}_i $ such that the sets $\ol{P^n_i}$ are pairwise disjoint, and for any $j$, it is possible to find an open superset of $\ol{P^n_j}$ that is disjoint from  $\ol{P^n_i}$ for all $i \neq j$. (For example, we may take a convergent  sequence  of distinct points in $W_n$ and put suitable open balls around the points to get the sets $P_i^n$.)

We now define  $\F$. For $n=1,2,3$ and $m\in \bbn$,  define 
$$\B^n_m =  \F^n_m  \cap {P^n_m}^*.$$ 
Since $ \F^n_m$ is upward closed, so is $\B^n_m$. Also, recalling that $ \F^n_m \subseteq V_n^*$ and $ {P^n_m} \subseteq W_n$, we note that any set in $\B^n_m$ must meet both $V_n$ and $W_n$. Now define 
\[ \F = \ol{\bigcup_{n,m} \B^n_m},\]
which is still an upward closed set.

Claim: For any $n \le 3$ and $m\in \bbn$, there exists an open set $O \subseteq E$ such that 
$$\emptyset \; \neq \;  \K(O) \cap \F \; \subseteq \; {\F^{n}_{m}} \cap  {P^n_m}^* .$$

Proof of claim: Fix   $n,m$. Fix an open set $P$ such that $\emptyset \neq \ol{P} \subseteq P^n_m.$ For the appropriate distinct $i$ and $j$, both different from $n$, $\ol{U_n} \cup \ol{V_n} \cup \ol{W_n}$ does not intersect $\ol{W_i} \cup \ol{V_j}$.  Let $O'$ be an open superset of $\ol{U_{n}} \cup \ol{V_{n}} $  disjoint from  $\ol{W_i} \cup \ol{V_j} \cup \ol{W_n}$, and let $O = O' \cup P$. Note that  $O \cap W_n = P$ and so $O$ is disjoint from $\ol{P^{n}_{k}}$ for $k \neq m$. 

To see that this $O$ is as required, first note that (\ref{location2}) holds for the sets $U=U_n$, $V=V_n$ and $\F_m = \F^n_m$, so there exists some $F \in \K(O) \cap \F^n_m$. Fixing any point $x \in P$, the set  $F  \cup \{x\}$ is now a member of $\K(O) \cap \B^{n}_{m}$, so that  $\K(O) \cap \F \neq \emptyset$. 

For the required set inclusion, we will actually show that  
$$ \K(O) \cap \F \; \subseteq \; {\F^{n}_{m}} \cap  {\ol{P}}^*.$$ 

Suppose that  there is some $ F \in \K(O) \cap \F $ that is not in  ${\F^{n}_{m}} \cap  {\ol{P}}^*$. This means that the open set $ \K(O)  \setminus ( \F ^n_m  \cap  {\ol{P}}^*)$ intersects $\F$. This open set must then  contain a set $F'$ in  $\B^{n_1}_{m_1}$ for some $n_1, m_1$ (since $\F = \ol{\bigcup_{i,j} \B^i_j}$).
 Any set in $\B^{n_1}_{m_1}$ meets $\ol{V_{n_1}}$, and  $P^{n_1}_{m_1}$. However, since $F' \subseteq O$, the only possibility is that $n_1 =n$   and $m_1 =m$; all other possibilities have been excluded from $O$. This is a contradiction. So the claim holds. 

Now (\ref{goal11}) is immediate: let $K$ be a compact set not in  $I$. Since $I$ is closed under countable union, and the $\fs$ sets $U_n$ form a cover of $E$, we may simply assume that $K$ is contained in one of the sets $U_n$, say $U_{n_0}$. Take  $m_0$ such that ${\F^{n_0}_{m_0}} \subseteq K^*$. To establish  (\ref{goal11}), use the claim above to find an open set $O$ such that $\emptyset \neq \K(O) \cap \F \subseteq {\F^{n_0}_{m_0}} \subseteq K^*$.

To see (\ref{goal22}), let $K \in I$. We show that $K^*$ is nowhere dense and hence meager in $\F$. Recalling the definition of $\F$, let $F \in \B^n_m$ for some $n,m$, and let $\epsilon > 0$; we show that there is a compact set in $\B^n_m \setminus K^*$ within $\epsilon$ of $F$.

Since  $F \in \F^n_m$ and $K^*$ is meager in $\F^n_m$, there is some $F'$ in $\F^n_m \setminus K^*$ within $\epsilon$ of $F$. 
If $F' \cap P^n_m \neq \emptyset$, we are done. If $F' \cap P^n_m = \emptyset$, the addition to $F'$ of a suitable point in $P^n_m \setminus K$ (using the fact that $K$ is meager and $F$ intersects $P^n_m$) gives a set still within $\epsilon$ of $F$ and in $\B^n_m$, and we are done. 

Finally, we turn to  (\ref{goal33}). Let $G  \in  \gd(E)$ be a set that is everywhere big with respect to $I$. Again, since the sets $U_n$ form a cover of $E$, by replacing $G$ with a suitable relatively open subset of itself, we may  assume that $G$ is contained in one of the sets $U_n$, say $U_{n_0}$. Now (since (\ref{goal3}) holds with $\F_m =\F^{n_0}_m$ and $U=U_{n_0}$)  there exists  $m_0$ such that $G^*$ is comeager in ${\F^{n_0}_{m_0}}$. Take an open set $O \subseteq E$ such that $\emptyset \neq \K(O) \cap \F \subseteq {\F^{n_0}_{m_0}} \cap  {P^{n_0}_{m_0}}^* $. The set $G^*$ is comeager in $\K(O) \cap \F$. To see this precisely, let $\Gee$ be a $\gd$ subset of $\K(E)$ such that $\Gee \cap {\F^{n_0}_{m_0}}$ is dense in ${\F^{n_0}_{m_0}}$ and contained in $G^*$. Now consider the set $\Gee \cap \K(O) \cap \F$, and note that:
\begin{itemize}
\item it  is a $\gd$ subset of $\F$;
\item it is contained in $G^*$ as  anything in $\K(O) \cap \F$ is actually from  ${\F^{n_0}_{m_0}}$;
\item  it is dense in $\K(O) \cap \F$. To see this, let $\U$ be an open set that intersects $\K(O) \cap \F$. Then 
$$\emptyset  \:  \: \neq   \: \: \U \cap \K(O) \cap \F  \:   \: \subseteq   \:  \: \U \cap \K(O) \cap {\F^{n_0}_{m_0}} \cap  {P^{n_0}_{m_0}}^*.$$
The rightmost set being an open subset of $ {\F^{n_0}_{m_0}} $, it must contain a member of $\Gee$, which will automatically be in $\U \cap \K(O) \cap \F$. 
\end{itemize}

 This proves (\ref{goal33}).
\end{proof}

It remains to prove the theorem for ideals that may contain non-meager sets. 

\begin{thm} \label{yay2} 
	Let $E$ be a compact Polish space and let $I \subseteq \K(E)$ be a  $\gd$ ideal of compact sets, with property~$(*)$. Then there exists a  compact set $\F \subseteq \K(E)$ such that any $\gd$ subset $G$ of $E$ is covered by countably many sets in $I$ if and only if $G^*$ is meager in $\F$.
	\end{thm}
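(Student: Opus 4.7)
The plan is to reduce Theorem \ref{yay2} to Theorem \ref{yay1} by restricting attention to a closed subset $F$ of $E$ on which the induced ideal contains only meager sets. The ``bad'' part of $E$, where members of $I$ have nonempty interior, will be swept into an open set that is itself covered by countably many members of $I$, and hence will be invisible to both sides of the desired biconditional.

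Concretely, let $W$ be the union of all open subsets of $E$ that can be covered by countably many sets in $I$. Since $E$ is Lindel\"of, $W$ itself is so covered, and is therefore the largest open set with this property. Set $F = E \setminus W$. The central claim is that $I|_F := I \cap \K(F)$ contains only meager subsets of $F$: if some $K \in I$ with $K \subseteq F$ had nonempty interior in $F$, one could find an open $U \subseteq E$ with $\emptyset \neq U \cap F \subseteq K$, and then $U \subseteq W \cup K$ would itself be covered by countably many sets in $I$, forcing $U \subseteq W$ by maximality, a contradiction. A routine check then shows that $I|_F$ is a $\gd$ $\sigma$-ideal in $\K(F)$ with property~$(*)$; for the latter, given $(K_n) \subseteq I|_F$ one applies property~$(*)$ in $E$ and intersects the resulting $\gd$ set with $F$.

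If $F = \emptyset$ then $E = W$, so $E$ itself is covered by countably many members of $I$, every compact subset of $E$ lies in $I$ by the $\sigma$-ideal property, and $\F = \emptyset$ trivially works. Otherwise I would apply Theorem \ref{yay1} to the compact Polish space $F$ and the ideal $I|_F$ to obtain a closed set $\F \subseteq \K(F) \subseteq \K(E)$ with the conclusion of that theorem for $\gd$ subsets of $F$. The claim is that the \emph{same} $\F$ works for $(E,I)$: for any $\gd$ set $G \subseteq E$, the intersection $G \cap F$ is $\gd$ in $F$, and since every $L \in \F$ is a subset of $F$ one has $G^* \cap \F = (G \cap F)^* \cap \F$; hence $G^*$ is meager in $\F$ iff $G \cap F$ is covered by countably many sets in $I|_F$. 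On the other hand $G$ is covered by countably many sets in $I$ iff $G \cap F$ is, since $G \cap W \subseteq W$ is automatically so covered. Combining these two equivalences gives the theorem.

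The main content, and really the only nontrivial step, is the meagerness claim for $I|_F$, which rests on the maximality of $W$ and the $\sigma$-ideal structure of $I$; everything else is bookkeeping to transfer the conclusion of Theorem \ref{yay1} from $F$ back to $E$.
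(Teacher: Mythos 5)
Your proof is correct and takes essentially the same approach as the paper's: decompose $E$ into a closed piece $F$ on which $I$ restricts to a meager-sets-only ideal, plus an open remainder covered by countably many members of $I$; apply Theorem \ref{yay1} to $(F, I\cap\K(F))$; then transfer the biconditional back to $E$ using the fact that every member of $\F$ lies in $F$. The only cosmetic difference is that you obtain the decomposition directly (Lindel\"of maximality of $W$) rather than via the transfinite derivative used in the remark after Proposition~\ref{prop}, but the resulting closed set $F$ is the same.
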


\begin{proof}
Starting with the whole space $E$, we once again carry out the transfinite procedure of Proposition \ref{prop} to obtain $E$ as the disjoint union of two sets:
\[E = U \cup E'\]
where  $E'$ is a closed subset of $E$ that satisfies the condition that
\[\forall F \in \K(E'),\; F \in I \implies  F \text{ is relatively meager in } E',\]
and $U$ is an open set that may  be written as 
\[U = \bigcup_n F_n,\]
where each $F_n$ is in $I$. This form of $U$ together with the fact that $I$ is closed under countable union makes it immediate that $\K(U) \subseteq I$. It is also immediate that any compact subset $K$ of $E$ is in $I$ if and only if $K \setminus U$  is in $I$. 

Now let $I' = I \cap \K(E')$. It is easily checked that $I'$ is a $\gd$ ideal with property $(*)$, both when $E$ and when $E'$ are considered as the underlying space. For a set $A\subseteq E'$, let \[A^* = \{K\in \K(E):  K \cap A  \neq \emptyset\}\]
as usual and let
 \[ {A^*}' = \{K\in \K(E'):  K \cap A  \neq \emptyset\}.\]
Since $I'$ contains only meager subsets of $E'$,  the space $E'$ and the ideal $I'$ satisfy the hypothesis of Theorem \ref{yay1}. So there exists a 
nonempty compact set $\F \subseteq \K(E')$  such that 
\begin{multline}
\forall  \text{ nonempty }G  \in  \gd(E'),\\
    \Big( G \subseteq \bigcup_{n\in \bbn} K_n\ \text{ for some } K_n \in I' \Big) \iff \Big({G^*}'  \textrm{ meager in } \F \Big). 
    \label{primefact}
  \end{multline}
We remark  that Theorem \ref{yay1} has given us the upward closedness of  $\F$ only as a subset of $\K(E')$. In any case,  we have
\[\F \subseteq \K(E') \subseteq \K(E).\]
We now claim that:
\begin{multline}
\forall  \text{ nonempty }G  \in  \gd(E),\\
    \Big( G \subseteq \bigcup_{n\in \bbn} K_n\ \text{ for some } K_n \in I \Big) \iff \Big({G^*}  \textrm{ meager in } \F \Big). 
    \label{lastgoal}
  \end{multline}
So let $G$ be a nonempty $\gd$ subset of $E$. Suppose $G^*$ is meager in $\F$. Since ${(G\cap E')^*}' \subseteq G^*$, we must have that ${(G \cap E')^*}'$ is meager in $\F$. So by (\ref{primefact}), 
\[ G \cap E' \subseteq \bigcup_{n\in \bbn} K_n\ \text{ for some } K_n \in I',\] 
and now
\[ G =  (G \cap E') \cup (G \cap U) \ \subseteq \ \bigcup_{n\in \bbn} K_n  \ \cup \ \bigcup_{n\in \bbn} F_n,\]
where each $K_n$ and each $F_n$ is in $I$.

To prove the converse of (\ref{lastgoal}), suppose that $G \subseteq \bigcup_{n} K_n\ \text{ for some } K_n \in I$. For each $n$, since $K_n \in I$ we have $(K_n \cap E') \in I'.$ Now, 
\[G^* \cap \F \ \subseteq \ \bigcup_{n\in \bbn} \big({K_n}^* \cap \F \big) \  = \  \bigcup_{n\in \bbn} \Big(({{K_n\cap E'})^*}' \cap \F\Big).\]
(The last equality comes from the fact that every set in $\F$ is wholly contained in $E'$ already.) Now since $({{K_n\cap E'})^*}'$ is meager in $\F$ for every $n$, we have that $G^*$ is meager in $\F$. This concludes the proof. 
\end{proof}	   

Theorems \ref{yay1} and \ref{yay2} together give Theorem \ref{yay}.

\end{document}